\newtheorem{thm}{Theorem}
\newtheorem{lem}{Lemma}
\newtheorem{theorem}{Theorem}[section]
\newtheorem{lemma}[theorem]{Lemma}
\newtheorem{corollary}[theorem]{Corollary}
\newtheorem{proposition}[theorem]{Proposition}
\theoremstyle{definition}
\theoremstyle{remark}
\newtheorem{remark}[theorem]{Remark}
\numberwithin{equation}{section}
\newcommand{\R}{{\mathbb R}}
\newcommand{\Q}{{\mathbb Q}}
\newcommand{\Z}{{\mathbb Z}}
\newcommand{\N}{{\mathbb N}}
\begin{document}

\title[k-primitive sets]{On the critical exponent for $k$-primitive sets}

\author{Tsz Ho Chan}
\address{Department of Mathematics, Kennesaw State University, Kennesaw, GA 30144}
\email{thchan6174@gmail.com}

\author{Jared Duker Lichtman}
\address{Mathematical Institute, University of Oxford, Oxford, OX2 6GG, UK}
\email{jared.d.lichtman@gmail.com}

\author{Carl Pomerance}
\address{Department of Mathematics, Dartmouth College, Hanover, NH 03755}
\email{carl.pomerance@dartmouth.edu}

\subjclass[2010]{Primary 11B75; Secondary 11A05, 05C70}

\date{December 3, 2020.}


\keywords{primitive set, primitive sequence}

\begin{abstract}
A set of positive integers is primitive (or 1-primitive) if no member divides another. Erd\H{o}s proved in 1935 that the weighted sum $\sum1/(n \log n)$ for $n$ ranging over a primitive set $A$ is universally bounded over all choices for $A$. In 1988 he asked if this universal bound is attained by the set of prime numbers. One source of difficulty in this conjecture is that $\sum n^{-\lambda}$ over a primitive set is maximized by the primes if and only if $\lambda$ is at least the critical exponent $\tau_1 \approx 1.14$.

A set is $k$-primitive if no member divides any product of up to $k$ other distinct members. One may similarly consider the critical exponent $\tau_k$ for which the primes are maximal among $k$-primitive sets.
In recent work the authors showed that $\tau_2 < 0.8$, which directly implies the Erd\H{o}s conjecture for 2-primitive sets. In this article we study the limiting behavior of the critical exponent, proving that $\tau_k$ tends to zero as $k\to\infty$.
\end{abstract}

\maketitle

\section{Introduction}

\bigskip

A set $A\subset \Z_{>1}$ is {\it primitive} if no member of $A$ divides another. Erd\H{o}s \cite{E35} showed that for any primitive set $A$,
\begin{align*}
\sum_{n \in A} \frac{1}{n \log n} \ < \ \infty.
\end{align*}
In fact, his proof bounded the sum uniformly over all primitive sets $A$. Further, in 1988 he asked if the maximizer is the set of primes $A = \mathbb{P}$.  This is now referred to as the Erd\H os conjecture for primitive sets:
\begin{align}\label{conj:Erdos}
\hbox{\em For primitive $A$, we have }~\sum_{n \in A} \frac{1}{n \log n} \ \le \  \sum_{p \in \mathbb{P}} \frac{1}{p \log p} \ = \ 1.6366\cdots,
\end{align}
The current record bound is $\sum_{n \in A}1/(n\log n) < e^\gamma=1.781\cdots$ due to the second and third authors \cite{LP}. Here $\gamma$ is the Euler--Mascheroni constant.

A potential approach towards the Erd\H os conjecture is via integration. Namely, we have
\begin{align*}
\sum_{n \in A} \frac{1}{n \log n} \ = \ \int_1^\infty \bigg(\sum_{n \in A} \frac{1}{n^{\lambda}}\bigg)\;d\lambda,
\end{align*}
and one might hope the integrand above is dominated by $\sum_p p^{-\lambda}$ for all $\lambda>1$. Note by a simple argument (see Lemma \ref{lem:minexp}), if this inequality holds for an exponent $\lambda$, then it will continue to hold for all larger exponents $\lambda' > \lambda$.

However, the primes are not maximal among primitive sets with respect to logarithmic density (i.e., $\lambda=1$). Indeed, by Erd\H{o}s \cite{E48} and Erd\H{o}s, S\'{a}rk\"{o}zy, and Szemer\'{e}di \cite{ESS},
\begin{align*}
\sup_{\text{primitive }A} \ \mathop{\sum_{n \in A}}_{n \le x} \frac{1}{n} \ = \ \Bigl( \frac{1}{\sqrt{2 \pi}} + o(1) \Bigr) \frac{\log x}{\sqrt{\log \log x}},
\end{align*}
where the maximizer is the set of positive integers with $\lfloor \log\log x\rfloor$ prime factors (with multiplicity).
By contrast, the primes satisfy
\[
\sum_{p \le x} \frac{1}{p} = \log \log x + O(1).
\]

Later, Banks and Martin \cite{BM} obtained the full characterization that 
\begin{align}\label{eq:tau1}
\sum_{\substack{n \in A\\n\le x}}n^{-\lambda} \ \le \ \sum_{p\le x} p^{-\lambda},
\end{align}
for all primitive $A$, $x>1$, if and only if $\lambda\ge \tau_1:=1.1403\ldots$, where $\tau=\tau_1$ is the unique real solution to the equation
\begin{align}\label{eq:taudef}
\sum_{p\in\mathbb P} p^{-\tau} \ = \ 1  \ + \ \Big(1-\sum_{p\in\mathbb P} p^{-2\tau}\Big)^{1/2}.
\end{align}
As such we call $\tau_1$ the {\it critical exponent} for primitive sets.

One may define a hierarchy of primitivity as follows. A 1-primitive set is primitive, and inductively for $k>1$, a $(k-1)$-primitive set is $k$-primitive if no member divides the product of $k$ distinct other members. That is, a set $A\subset \Z_{>1}$
is {\it $k$-primitive} if no member of $A$ divides any product of $j$ distinct other members, for any $1\le j\le k$.\footnote{In \cite{CGS}, $A$ is called $k$-primitive if no member of $A$ divides any product of $k$ distinct other members. These definitions only differ when $|A|\le k$, and do not affect the critical exponent $\tau_k$, by Lemma \ref{Ysmall} below.}
Note that if \eqref{eq:tau1} holds for all $\lambda>\tau$, then it holds for $\lambda=\tau$.  Thus,
one may similarly consider the critical exponent $\tau_k$ for which \eqref{eq:tau1} holds for all $k$-primitive sets if and only if $\lambda\ge \tau_k$. Note that $\tau_j\ge \tau_k$ for $1\le j\le k$. 

Recently, the authors \cite{CLP} proved $\tau_2\le 0.7983$. In particular $\tau_2<1$, thereby establishing the Erd\H{o}s conjecture in the case of 2-primitive sets.

\begin{thm}[\cite{CLP}]
\label{CLPthm}
For $\lambda\ge 0.7983$, we have
\begin{align*}
\sum_{\substack{n \in A\\n\le x}}n^{-\lambda} \ \le \ \sum_{p\le x} p^{-\lambda}
\end{align*}
for all $2$-primitive sets $A$ and $x\ge2$. In particular, any $2$-primitive set $A$ satisfies
\begin{align*}
\sum_{n \in A} \frac{1}{n\log n} \ \le \ \sum_p \frac{1}{p\log p}.
\end{align*}
\end{thm}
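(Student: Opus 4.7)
The plan is to reduce the global inequality to a per-prime bound via decomposition by least prime factor. Write $A = \bigsqcup_p A_p$ where $A_p = \{n \in A : P^-(n) = p\}$ and $P^-(n)$ denotes the least prime factor of $n$. Since both sides of the desired inequality are additive in $p \le x$, it suffices to prove
\[
\sum_{\substack{n \in A_p \\ n \le x}} n^{-\lambda} \ \le \ p^{-\lambda}
\]
for every prime $p \le x$. Writing $n = pm$ with $P^-(m) \ge p$ and setting $M_p = \{n/p : n \in A_p\}$, this reduces to proving $\sum_{m \in M_p} m^{-\lambda} \le 1$ uniformly in $p$. The case $p \in A$ is trivial since then $M_p = \{1\}$, so one may assume $p \notin A$.

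The key new ingredient beyond the $1$-primitive setting is to exploit the $2$-primitivity of $A$ to further restrict $M_p$. For distinct $n_1, n_2, n_3 \in A_p$ with $n_i = p m_i$, the condition $n_1 \nmid n_2 n_3$ becomes $m_1 \nmid p\,m_2 m_3$; whenever $p \nmid m_1$ this simplifies to $m_1 \nmid m_2 m_3$. Thus the $p$-coprime part of $M_p$ is itself $2$-primitive inside the $p$-rough integers. Combined with Mertens-type estimates for the tail sums $\sum_{q \ge p} q^{-\lambda}$ and $\sum_{q \ge p} q^{-2\lambda}$, and a Cauchy--Schwarz step to handle the product-divisibility constraint, this extra structure should yield a sharper Euler-product-style bound on $\sum_{m \in M_p} m^{-\lambda}$ than the one coming from $1$-primitivity alone. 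The critical threshold is then the smallest $\lambda$ for which this tightened bound is still $\le 1$ at the dominant prime $p=2$, and solving a transcendental equation analogous to \eqref{eq:taudef} but corrected for $2$-primitivity should pinpoint $\lambda \approx 0.7983$.

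The main obstacle is the small-prime case, especially $p = 2$, where $M_2$ is densely populated: the Banks--Martin per-prime bound for $1$-primitive sets at $p=2$ only holds for $\lambda \ge \tau_1 \approx 1.14$, so to push the threshold below $1$ the $2$-primitivity must be used quantitatively. This likely requires a case analysis based on which small even numbers ($4, 6, 10, \ldots$) belong to $A$, together with an explicit numerical verification that in each configuration the sharpened Euler-product bound at $\lambda = 0.7983$ stays below $1$. Once $p=2$ is controlled, the cases $p \ge 3$ should follow with comfortable slack because the tail sums $\sum_{q \ge p} q^{-\lambda}$ shrink rapidly. Summing the per-prime bound over $p \le x$ yields the first inequality of the theorem, and the Erd\H{o}s-type conclusion for $\sum 1/(n\log n)$ then follows by integrating the inequality in $\lambda$ from $1$ to $\infty$ and using $0.7983 < 1$ so that the per-exponent bound is active throughout.
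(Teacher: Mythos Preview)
This theorem is not proved in the present paper; it is quoted from the companion paper \cite{CLP} and used here only as the base case $k=2$ for the induction in Theorem~\ref{thm:actuallymain}. That said, the structure of the proof in \cite{CLP} can be inferred from Section~\ref{sec:main}: one reduces to a set $T$ of composites with $|T_p|\ge 2$ for every $p\in\mathcal P(T)$, splits $T$ according to whether the \emph{largest} prime (or prime-power) factor satisfies $P(t)\ge t^\theta$, bounds the ``smooth'' part by a counting lemma of the type of Lemma~\ref{smooth2}, and handles the non-smooth part by passing to the derived set $\{t/P(t)\}$, which is $1$-primitive and hence controlled by the Banks--Martin threshold $\tau_1$. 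The constant $0.7983$ is not the root of a clean equation; it is an explicit value at which the resulting numerical inequalities close.

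Your proposal takes a different route---decomposition by \emph{least} prime factor in the Banks--Martin style---but as written it is a strategy rather than a proof, and the strategy has a real obstruction. The per-prime inequality $\sum_{m\in M_p}m^{-\lambda}\le 1$ at $p=2$ is exactly what fails for $1$-primitive sets once $\lambda<\tau_1\approx 1.14$, and you have not supplied a mechanism by which $2$-primitivity closes this gap. Saying that the $p$-coprime part of $M_p$ is again $2$-primitive is correct but does not by itself improve the bound: you would be invoking the very inequality you are trying to prove, at the same exponent, on a subset. The references to ``a Cauchy--Schwarz step'' and ``a transcendental equation analogous to \eqref{eq:taudef}'' are placeholders, not arguments; in particular, $0.7983$ is known only to be an upper bound for $\tau_2$ (the paper records $\tau_2\in[0.5,0.7983]$), so it cannot arise as the exact root of any such equation. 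To make a least-prime-factor approach work you would need a genuinely new quantitative input---something playing the role of the smooth/non-smooth dichotomy and the $t\mapsto t/P(t)$ reduction---rather than a refinement of the Banks--Martin Euler product.
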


In 1938, Erd\H{o}s \cite{E38} first studied the maximal cardinality of 2-primitive sets (i.e., $\lambda=0$). 
The first author together with Gy\H{o}ri and S\'{a}rk\"{o}zy \cite{CGS} extended it to all $k\ge2$, also see \cite{C11} and \cite{PS}. Namely, there is an absolute constant $c>0$ such that
\begin{align}\label{eq:CGS}
\frac{1}{8k^2} \frac{x^{\frac{2}{k+1}}}{(\log x)^2} \ \le \ \sup_{k\text{-primitive }A} \ \mathop{\sum_{n \in A}}_{n \le x} 1 \ - \ \sum_{p\le x}1 \ \le \ ck^2 \frac{x^{\frac{2}{k+1}}}{(\log x)^2},
\end{align}
for $x$ sufficiently large. Here the lower bound is attained by some set $A''$ consisting of the primes in $(x^{1/(k+1)}, x]$ and a size $x^{2/(k+1)}/8 (k\log x)^2$ subset of products of $k+1$ primes in $(1, x^{1/(k+1)}]$. In particular, the lower bound in \eqref{eq:CGS} implies
\[
\sum_{\substack{n \in A''\\n\le x}}n^{-\lambda} \ge \sum_{x^{1/(k+1)} < p \le x} p^{-\lambda} + \frac{1}{x^\lambda} \frac{x^{2/(k+1)}}{8 (k\log x)^2} > \sum_{x^{1/(k+1)} < p \le x} p^{-\lambda} + \sum_{p \le x^{1/(k+1)}} p^{-\lambda},
\]
when $\lambda < 1/k$ and $x$ is sufficiently large. Hence we quickly deduce $\tau_k \ge 1/k$.

Thus combining with Theorem \ref{CLPthm}, the critical exponent for 2-primitive sets lies in the interval
\begin{align}
\tau_2 \ \in \ [0.5,\,0.7983].
\end{align}
It is an open question to determine the exact value of $\tau_2$, and perhaps characterize $\tau_2$ as a solution to some functional equation, as with \eqref{eq:tau1} for $\tau_1$.

In light of this, it is natural to ask about the behavior of the decreasing sequence 
 $\tau_1 \ge \tau_2 \ge \tau_3 \ge \cdots$, in particular whether $\tau_k$ tends to zero as $k\to\infty$. The main result of this article is to answer in the affirmative, with the following quantitative result. 

\begin{thm}\label{thm:main}
Let $p_k$ denote the $k$th prime number.
For any $k \ge 1$ and $\lambda \ge 1.5/\log p_k$, we have
\begin{align}\label{eq:mainthm}
\sum_{\substack{n\le x\\n \in A}} n^{-\lambda} \ \le \ \sum_{p\le x} p^{-\lambda}
\end{align}
for all $k$-primitive sets $A$ and $x\ge2$.
\end{thm}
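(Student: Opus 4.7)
The plan is to use a charging argument from composite elements of $A$ to primes not in $A$, exploiting the 1-primitivity of $A$ (which forces the prime factors of any composite in $A$ to lie outside $A$) and the fuller $k$-primitivity (which limits how composites can share prime factors). After separating primes from composites,
\[
\sum_{\substack{n\in A\\ n\le x}}n^{-\lambda} \ = \ \sum_{\substack{p\in A\cap\mathbb P\\ p\le x}}p^{-\lambda} + \sum_{\substack{n\in A\text{ composite}\\ n\le x}}n^{-\lambda},
\]
the first sum is trivially bounded by $\sum_{p\le x}p^{-\lambda}$, so the problem reduces to showing that the composite sum is bounded by $\sum_{p\le x,\ p\notin A}p^{-\lambda}$.

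For this, I would assign each composite $n\in A$ to its smallest prime factor $q=P^-(n)$, which must lie outside $A$, and factor $n=q^au$ with $a\ge 1$ and $\gcd(u,q)=1$. Setting $U_a=\{u:q^au\in A\}$, I would verify that $U_a\setminus\{1\}$ is itself $k$-primitive with every element having smallest prime factor strictly greater than $q$: indeed, if distinct $u_0,u_1,\ldots,u_j\in U_a$ (with $1\le j\le k$) satisfied $u_0\mid u_1\cdots u_j$, then since $\gcd(u_0,q)=1$ one obtains $q^au_0\mid (q^au_1)\cdots(q^au_j)$, contradicting the $k$-primitivity of $A$. The fiber sum above $q$ then decomposes as
\[
\sum_{\substack{n\in A\text{ composite}\\ P^-(n)=q,\ n\le x}}n^{-\lambda} \ = \ \sum_{a\ge 1}q^{-a\lambda}\sum_{\substack{u\in U_a\\ u\le x/q^a}}u^{-\lambda}.
\]
An iterative application of this decomposition at the next prime $>q$ (or an induction on primes) should then close the estimate, with the Chan--Gy\H{o}ri--S\'ark\"ozy cardinality bound \eqref{eq:CGS} serving as a terminating input. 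The threshold $\lambda\ge 1.5/\log p_k$ enters through the numerical estimate $q^{-\lambda}\le e^{-1.5}$ for $q\ge p_k$, which provides the decay needed so that the geometric-type series closes against $q^{-\lambda}$.

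The main obstacle is that the pure fiber-wise inequality $\sum_{n\in A,\,P^-(n)=q,\,n\le x}n^{-\lambda}\le q^{-\lambda}$ can fail---for instance, $A=\{qp:p\text{ prime},p>q\}$ is $k$-primitive for every $k$, yet the fiber above $q$ has weight $q^{-\lambda}\sum_pp^{-\lambda}$, which exceeds $q^{-\lambda}$. Thus the induction must be set up with a strengthened statement that accounts for the slack primes $\ge q$ which are missing from $A$ on both sides, something of the shape: for every $k$-primitive set $B$ all of whose elements have smallest prime factor $\ge q$, one has $\sum_{n\in B,\,n\le x}n^{-\lambda}\le\sum_{p\ge q,\,p\le x}p^{-\lambda}$. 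Carefully managing this telescoping through the primes $p_1,\ldots,p_k$---so that the recursive losses at each prime are compensated by the missing-prime slack---is the technical heart of the argument, and the constant $1.5$ should emerge by balancing the factor $e^{-1.5}$ at $p_k$ against an explicit constant coming from \eqref{eq:CGS}.
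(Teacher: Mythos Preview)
Your plan has a genuine gap: the induction you sketch over the smallest prime factor does not close. After peeling off the smallest prime $q$ you obtain, for each $a\ge1$, a $k$-primitive set $U_a$ supported on primes $>q$; but applying your strengthened hypothesis to each $U_a$ separately yields only
\[
\sum_{a\ge1}q^{-a\lambda}\sum_{u\in U_a}u^{-\lambda}\ \le\ \frac{q^{-\lambda}}{1-q^{-\lambda}}\sum_{p>q}p^{-\lambda},
\]
which is far larger than the $q^{-\lambda}$ you need. Your strengthened statement is in fact equivalent (via prime-swapping) to the original one, so it carries no extra leverage; and you never exploit $k$-primitivity to reduce $k$ --- you only check that the fibers $U_a$ remain $k$-primitive, which is no progress. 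The appeal to the Chan--Gy\H{o}ri--S\'ark\"ozy bound \eqref{eq:CGS} as a ``terminating input'' is unexplained: that is a cardinality estimate, and you give no mechanism for converting it into a weighted inequality at the required exponent, nor any reason why the constant $1.5$ (as opposed to any other) would fall out of your scheme.

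The paper's argument is structurally different. It inducts on $k$, not on primes, and works with the \emph{largest} prime-power factor $Q(t)$ rather than the smallest prime. The set $T$ of composites is split into a smooth part $\{Q(t)<t^{\theta}\}$, handled by a counting lemma ($N(z)\le z^{1/k+\theta}$, Lemma~\ref{smooth2}), and a rough part $\{Q(t)\ge t^{\theta}\}$, which is mapped injectively (Lemma~\ref{derived2}) to a $(k-1)$-primitive set so the inductive hypothesis applies with exponent $\lambda_{k-1}=\lambda_k/(1-\theta_k)$. With $\theta_k=1/p_k$ this gives $\lambda_k=c\prod_{j\le k}(1-1/p_j)$, and Mertens then produces the threshold $\approx ce^{-\gamma}/\log p_k$, explaining the $1.5/\log p_k$. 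The essential idea missing from your proposal is this descent in $k$ via the map $t\mapsto t/Q(t)$ (or $t\mapsto Q(t)$).
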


Thus, for $k\ge1$,
\begin{align}
\frac{1}{k} \ \le \ \tau_k \ \le \ \frac{1.5}{\log p_k}.
\end{align}
Clearly the upper and lower bounds differ substantially, and it remains an unsolved problem to narrow this gap.

\subsection{Generalizations}

Upon closer inspection of the proofs in \cite{C11}, \cite{CGS}, we observe the lower bound for \eqref{eq:CGS} holds under a stronger notion of $k$-primitivity, namely, one forbids a member from dividing the product of $k$ other members, {\it not necessarily distinct}. Similarly, the upper bound in \eqref{eq:CGS} holds even if one relaxes to only forbid a member from dividing the {\it least common multiple} (lcm) of $k$ other members.

Hence this naturally suggests the following generalizations. We say a set $A\subset \Z_{>1}$ is ``strongly $k$-primitive" if no member divides the product of $k$ other members which are not necessarily distinct.  Any strongly $k$-primitive set is $k$-primitive, but not vice versa.  For example, $A=\{4,5,6\}$ is 2-primitive but not strongly 2-primitive.
In the other direction, we say a set $A\subset \Z_{>1}$ is ``lcm $k$-primitive" if no member divides the lcm of $k$ other members.  Here, every $k$-primitive set is lcm $k$-primitive,
but not vice versa.  An example is $A=\{4,6,10\}$ which is lcm 2-primitive, but not 2-primitive.

One can ask for critical exponents in the strong case and in the lcm case.  Denote the former
by $\tau^{(\rm{s})}_k$ and the latter by $\tau^{(\rm{lcm})}_k$.  By the above comments, for each $k\ge2$ we have
\begin{align*}
\tfrac{1}{k} \ \le \ \tau^{(\rm{s})}_k \ \le \ \tau_k \ \le \ \tau^{(\rm{lcm})}_k.
\end{align*}

From these definitions, two natural questions arise:  Is there a better upper bound for $\tau^{(\rm{s})}_k$ than that afforded by Theorem
\ref{thm:main}?  Is there an upper bound for $\tau^{(\rm{lcm})}_k$ that is $o(1)$ as $k\to\infty$?

We make progress on these two questions by proving the following two theorems.

\begin{thm}\label{thm:lcm}
For any $k \ge 1$ $\tau_k^{(\rm lcm)}\le 1.7/\log p_k$.
In addition, $\tau_2^{(\rm lcm)}\le 1$, so the Erd\H os conjecture is true for {\rm lcm} {\rm 2}-primitive sets.
\end{thm}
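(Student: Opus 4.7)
The plan is to mirror the proof strategy of Theorem~\ref{thm:main}, adapted to the weaker lcm $k$-primitivity condition. By Lemma~\ref{lem:minexp}, it suffices to verify~\eqref{eq:mainthm} at the threshold $\lambda = 1.7/\log p_k$ (resp.\ $\lambda = 1$ for the second claim). Split $A = A_P \sqcup A_C$ with $A_P := A \cap \mathbb{P}$ and $A_C := A \setminus \mathbb{P}$. Since the prime part contributes at most $\sum_{p \le x} p^{-\lambda}$, the task reduces to bounding the composite sum
\[
\sum_{n \in A_C,\, n \le x} n^{-\lambda} \ \le \ \sum_{p \le x,\, p \notin A_P} p^{-\lambda}.
\]

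The key structural input is the \emph{prime-power covering} interpretation of lcm $k$-primitivity: $n \mid \mathrm{lcm}(m_1, \dots, m_k)$ exactly when every maximal prime power $q^a \parallel n$ divides some $m_i$. For each composite $n \in A_C$, this forces the existence of a ``witness'' prime power of $n$ that cannot be covered by $k$ other members of $A$. I would use this witness to associate each $n \in A_C$ with a prime $\phi(n) \le x$ outside $A_P$, control the multiplicity of $\phi$ via the lcm $k$-primitive property, and bound the weighted sum using Mertens-type estimates for $\sum_{p \le x} 1/p$. The constant $1.7/\log p_k$ then emerges from optimizing the resulting balance; it is slightly weaker than the $1.5/\log p_k$ of Theorem~\ref{thm:main} because lcm-primitivity forbids fewer configurations than ordinary $k$-primitivity.

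For the sharper claim $\tau_2^{(\rm lcm)} \le 1$, I would exploit the fact that when $k = 2$ the covering condition simplifies to a \emph{partition} of the maximal prime powers of $n$ between two elements. Thus for any composite $n \in A_C$ with coprime factorization $n = uv$ and $u, v > 1$, the set $A \setminus \{n\}$ cannot simultaneously contain multiples $u'$ of $u$ and $v'$ of $v$ (each supplying the full prime-power support of its factor). Combining this strong restriction with the Banks--Martin framework at $\lambda = 1$ and the classical Mertens bound $\sum_{p \le x} 1/p = \log \log x + M + O(1/\log x)$ should yield the Erd\H os-type inequality directly, hence $\tau_2^{(\rm lcm)} \le 1$.

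The main obstacle I anticipate is the delicate combinatorial bookkeeping needed to extract the precise constants. Concretely, establishing the correct multiplicity bound on the map $\phi: A_C \to \mathbb{P} \setminus A_P$ requires balancing the typical number of distinct prime factors of composites in $A$ against the density of primes in $(p_k, x]$, and handling carefully the transition when composites are built from high prime powers of a few primes rather than from many distinct primes; the latter regime is where the lcm condition is strictly weaker than ordinary $k$-primitivity and accounts for the loss from $1.5$ to $1.7$ in the constant.
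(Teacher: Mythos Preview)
Your proposal correctly identifies the prime-power interpretation of lcm-divisibility (namely that $n\mid{\rm lcm}(m_1,\dots,m_k)$ iff each maximal prime power $q^a\parallel n$ divides some $m_i$), and this is indeed the observation that drives the paper's argument. But the proof you sketch diverges from the paper at the crucial structural step, and the divergence leaves a real gap.

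The paper does \emph{not} attempt a direct map $\phi:A_C\to\mathbb P\setminus A_P$ with controlled multiplicity. Instead it proceeds by \emph{induction on $k$}: one splits the composite set $T$ according to whether the largest prime-power factor $Q(t)$ exceeds $t^\theta$, bounds the ``smooth'' part by a counting lemma (Lemma~\ref{smooth2}, already stated for lcm $k$-primitive sets), and for the non-smooth part builds an explicit lcm $(k{-}1)$-primitive set $f(T_\theta)$ via the map of Lemma~\ref{derived2} (sending $t$ to $Q(t)$ or $t/Q(t)$ according to whether $Q(t)$ has another multiple in $T$). The induction hypothesis then applies to $f(T_\theta)$ at the exponent $\mu_{k-1}=\mu_k/(1-\theta)$. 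The only change from the $k$-primitive proof is that the base case is $k=1$ (using $\tau_1<8/7=\mu_1$) rather than $k=2$, and the constants $\mu_k$ replace $\lambda_k$; this is precisely where the loss from $1.5$ to $1.7$ appears, not from any combinatorial slack in a multiplicity bound.

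Your direct approach would require showing that the total $\lambda$-weight of composites with witness prime $p$ is at most $p^{-\lambda}$, uniformly. You acknowledge this as the ``main obstacle,'' and it is: without the inductive reduction there is no evident mechanism to control, for instance, a large family of $t\in A_C$ all sharing $Q(t)=p^a$ for the same $p$, beyond the trivial observation that their $t/Q(t)$ values must themselves form a primitive-like set. That observation is exactly what the induction exploits, and omitting it leaves the argument incomplete.

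For the second assertion, the paper obtains $\tau_2^{(\rm lcm)}\le 1$ simply because $\mu_2=3\cdot\tfrac12\cdot\tfrac23=1$, so the inductive step at $k=2$ already delivers~\eqref{eq:mainthm} at $\lambda=1$. Your proposed route via the coprime-partition observation and Mertens at $\lambda=1$ is a plausible heuristic, but ``Banks--Martin at $\lambda=1$'' is not directly available (their result requires $\lambda\ge\tau_1>1$), and you have not indicated how the partition restriction converts into the needed inequality $\sum_{n\in A_C}n^{-1}\le\sum_{p\notin A_P}p^{-1}$.
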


For the $\tau^{(\rm{s})}_k$ case we 
prove a considerably stronger inequality.
\begin{thm}
\label{indistinct}
For $k \ge 2$ we have $\tau_k^{(\rm s)}\le (3\log k)/k$.
\end{thm}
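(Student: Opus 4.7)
The approach I would take exploits the strong $k$-primitivity via a ``$k$-core'' injection from $A$ into a primitive set. For each $a \in A$, define
\[
\pi_k(a) \ := \ \prod_p p^{\lceil v_p(a)/k \rceil},
\]
which is the unique least positive integer $m$ satisfying $a \mid m^k$. A direct check of prime factorizations shows that $a \mid b^k$ if and only if $\pi_k(a) \mid b$, so strong $k$-primitivity of $A$ is equivalent to the assertion that for each $a \in A$, the only multiple of $\pi_k(a)$ inside $A$ is $a$ itself. From this I would immediately deduce that $\pi_k$ is injective on $A$, the image $B := \pi_k(A)$ is a primitive set, and $a^{1/k} \le \pi_k(a) \le a$.

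Using $\pi_k(a) \le a$ together with injectivity,
\[
\sum_{a \in A,\, a \le x} a^{-\lambda} \ \le \ \sum_{m \in B,\, m \le x} m^{-\lambda}.
\]
Since the Banks--Martin bound for primitive sets applies only for $\lambda \ge \tau_1 \approx 1.14$, far above our $\lambda = 3 \log k / k$, I would proceed by splitting $A = A_{\mathrm{pp}} \cup A_{\mathrm{mp}}$ into prime-power and multi-prime members. For $A_{\mathrm{pp}}$, the injectivity of $\pi_k$ permits at most one $a = p^j \in A$ in each exponent level $j \in ((\ell-1)k, \ell k]$ at each prime $p$, yielding
\[
\sum_{a \in A_{\mathrm{pp}},\, a \le x} a^{-\lambda} \ \le \ \sum_{p \le x} \frac{p^{-\lambda}}{1 - p^{-k\lambda}} \ = \ \sum_{p \le x} p^{-\lambda} \ + \ O\Bigl(\sum_p \frac{p^{-(k+1)\lambda}}{1-p^{-k\lambda}}\Bigr),
\]
and for $\lambda = 3 \log k / k$ the error term is of order $k^{-3 \log 2}$, small in $k$. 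For $A_{\mathrm{mp}}$, since strongly $k$-primitive implies $k$-primitive, \eqref{eq:CGS} gives $|A_{\mathrm{mp}} \cap [1,x]| \ll_k x^{2/(k+1)}/(\log x)^2$, and Abel summation combined with $a \ge 6$ for $a \in A_{\mathrm{mp}}$ controls the contribution once $\lambda > 2/(k+1)$, a condition implied by $\lambda = 3 \log k / k$ for $k \ge 2$.

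The main obstacle is coordinating these two pieces so that the total fits inside $\sum_{p \le x} p^{-\lambda}$. Because the prime-power bound alone already overshoots by the factor $(1 - p^{-k\lambda})^{-1} > 1$, compensation must come from the ``unused prime budget'' $\sum_{p \le x,\, p \notin A} p^{-\lambda}$. One must therefore separate primes $p \in A$ (whose budget $p^{-\lambda}$ is fully consumed) from primes $p \notin A$ (where the full $p^{-\lambda}$ is available) and carefully check that the missing-prime contribution always dominates. The choice of the constant $3$ in $3 \log k / k$ appears tuned precisely so that the prime-power overshoot and the multi-prime contribution are simultaneously of order at most $k^{-\Theta(1)}$, leaving enough slack uniformly for all $k \ge 2$.
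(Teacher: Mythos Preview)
Your structural observation about the map $\pi_k(a) = \prod_p p^{\lceil v_p(a)/k\rceil}$ is correct and elegant: it is injective on any strongly $k$-primitive set and its image is primitive (though your word ``equivalent'' is too strong---the set $\{6,10,15\}$ satisfies your $\pi_2$-condition but $6\mid 10\cdot 15$).  However, the argument breaks down at the multi-prime step.  The bound you quote from \eqref{eq:CGS},
\[
|A_{\mathrm{mp}}\cap[1,x]| \ \ll_k \ \frac{x^{2/(k+1)}}{(\log x)^2},
\]
is not what that result says: it bounds $|A\cap[1,x]|-\pi(x)$, not the non-prime-power part of $A$.  A direct counterexample for $k=2$ is $A=\{2p:p\text{ odd prime}\}$, which is strongly $2$-primitive (if $2p\mid(2q)(2r)$ then $p\in\{q,r\}$), yet $|A_{\mathrm{mp}}\cap[1,x]|\sim x/(2\log x)\gg x^{2/3}$.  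So Abel summation on this bound cannot control $\sum_{a\in A_{\mathrm{mp}}}a^{-\lambda}$, and without it the ``unused prime budget'' accounting you sketch has nothing to balance against.  Indeed, for this particular $A$ every odd prime is absent from $A$, and the full sum $\sum_{a\in A}a^{-\lambda}=2^{-\lambda}\sum_{p\text{ odd}}p^{-\lambda}$ is genuinely of the same order as $\sum_{p\le x}p^{-\lambda}$; the inequality holds, but only by the factor $2^{-\lambda}<1$, not by any power saving.

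The paper's proof takes a completely different route: it is an induction on $k$, passing from a strongly $k$-primitive set $T$ of composites to the strongly $(k-1)$-primitive set $\{t/P(t):t\in T\}$ (where $P(t)$ is the largest prime factor), after first splitting off the smooth part $\{t:P(t)<t^{\theta_k}\}$ and bounding it combinatorially.  The choice $\lambda_k=(3\log k)/k$ and $\theta_k=1-\lambda_k/\lambda_{k-1}$ is designed so that $t^{-\lambda_k}\le (t/P(t))^{-\lambda_{k-1}}$ on the non-smooth part, feeding the induction.  Your $\pi_k$ map jumps all the way to a $1$-primitive set in one step, which is structurally cleaner but loses the quantitative leverage: a primitive set at exponent $\lambda\ll 1$ is simply too weak a hypothesis to recover the prime sum.
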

Thus,
\[
\frac1k\le\tau_k^{(\rm s)}\le\frac{3\log k}k
\]
for all $k\ge2$.  It would be nice to so sharpen the inequalities for $\tau_k$ and $\tau_k^{(\rm lcm)}$.




\section{Preliminary lemmas}

\begin{lem}\label{lem:minexp}
Take sets $A,B\subset \R_{>1}$. Suppose $\lambda\ge 0$ satisfies $I_\lambda(x)\ge0$ for all $x>1$, where
\begin{align*}
I_\lambda(x) := \sum_{\substack{a\in A\\ a\le x}} a^{-\lambda} \ - \ \sum_{\substack{b\in B\\ b\le x}} b^{-\lambda}.
\end{align*}
Then $I_{\lambda'}(x)\ge0$ for all $\lambda'\ge \lambda$, $x>1$.
\end{lem}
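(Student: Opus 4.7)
The plan is to reduce the claim to a single application of Abel summation (equivalently, integration by parts for Stieltjes integrals). Write $\delta := \lambda' - \lambda \ge 0$, and note the elementary identity $c^{-\lambda'} = c^{-\lambda}\cdot c^{-\delta}$ for each term. Treating $I_\lambda(\cdot)$ as a right-continuous step function of bounded variation (with jumps $+a^{-\lambda}$ at $a\in A$ and $-b^{-\lambda}$ at $b\in B$), we may express
\[
I_{\lambda'}(x) \ = \ \int_{1^-}^{x} t^{-\delta}\,dI_\lambda(t).
\]

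Next, I would apply integration by parts to the Stieltjes integral. Since $I_\lambda(1^-)=0$ and the function $t\mapsto t^{-\delta}$ is continuously differentiable on $[1,x]$, this yields
\[
I_{\lambda'}(x) \ = \ I_\lambda(x)\,x^{-\delta} \ + \ \delta\int_1^x I_\lambda(t)\,t^{-\delta-1}\,dt.
\]
By hypothesis $I_\lambda(t)\ge 0$ for every $t>1$, and $\delta\ge 0$, $x^{-\delta}>0$, so both terms on the right are nonnegative, giving $I_{\lambda'}(x)\ge 0$ as desired.

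The argument is entirely mechanical once the Stieltjes formulation is in place, so there is no serious obstacle. The only point requiring mild care is justifying the partial summation when $A$ or $B$ is infinite: this is handled by first restricting to $A\cap[1,x]$ and $B\cap[1,x]$, both of which must be countable (and in fact the formal sums must converge on bounded ranges for $I_\lambda(x)$ to be defined in the first place). If one prefers to avoid Stieltjes integrals altogether, the same identity can be obtained by summing $a^{-\lambda'}=a^{-\lambda}-\int_a^\infty\delta\,t^{-\delta-1}a^{-\lambda}\,dt$ over $a\le x$ (truncated at $x$), swapping the order of summation and integration, and collecting the contributions of $A$ and $B$ into $I_\lambda$; the resulting formula is the discrete analogue of the display above and is again manifestly nonnegative.
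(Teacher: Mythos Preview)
Your proof is correct and is essentially identical to the paper's: both apply Abel summation (partial summation) with weight $t^{-(\lambda'-\lambda)}$ to obtain the identity $I_{\lambda'}(x)=x^{-\delta}I_\lambda(x)+\delta\int_1^x t^{-\delta-1}I_\lambda(t)\,dt$ and conclude from nonnegativity of both terms. The paper is simply terser, writing the formula in one line without the Stieltjes-integral framing or the convergence remarks.
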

\begin{proof}
By partial summation,
\begin{align*}
I_{\lambda'}(x) \ = \ x^{\lambda - \lambda'}I_{\lambda}(x) \ + \ (\lambda'-\lambda)\int_1^x u^{\lambda-\lambda'-1} I_{\lambda}(u) \;du.
\end{align*}
Hence if $I_{\lambda_k}(x)\ge0$ for all $x>1$, it then follows $I_{\lambda'}(x)\ge0$ for all $\lambda'\ge \lambda$ as claimed.
\end{proof}

\begin{lem} \label{primeineq}
Let 
$$
\lambda_1=1.2,\quad \lambda_2=0.8, \quad\hbox{ and }~\lambda_k = 2.625 \prod_{i=1}^k\Big(1-\frac1{p_i}\Big)~ \hbox{ for }~k\ge3.
$$
Then
\begin{align*}
\lambda_k>\frac{1.45}{\log p_k}~\hbox{ for }~k\ge 62,\qquad
\lambda_k<\frac{1.5}{\log p_k}~\hbox{ for }~k\ge1.
\end{align*}
In addition, let
$$
\mu_1=8/7 \quad\hbox{ and }~\mu_k = 3 \prod_{i=1}^k\Big(1-\frac1{p_i}\Big)~ \hbox{ for }~k\ge2.
$$
Then
\begin{align*}
\mu_k>\frac{1.65}{\log p_k}~\hbox{ for }~k\ge 47,\qquad
\mu_k<\frac{1.7}{\log p_k}~\hbox{ for }~k\ge1.
\end{align*}
\end{lem}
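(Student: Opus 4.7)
I would combine the asymptotics of the partial Mertens product with direct numerical verification for small $k$. By Mertens' third theorem,
\begin{align*}
\lim_{k\to\infty} \lambda_k \log p_k \ = \ 2.625\, e^{-\gamma} \ \approx \ 1.4738, \qquad \lim_{k\to\infty} \mu_k \log p_k \ = \ 3\, e^{-\gamma} \ \approx \ 1.6844,
\end{align*}
and both limits lie strictly inside the target intervals $(1.45,\,1.5)$ and $(1.65,\,1.7)$ respectively. Hence each inequality holds for all sufficiently large $k$; the whole content of the lemma is to make this quantitative and to handle the trailing small cases.

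The main tool would be the explicit Mertens estimates of Rosser--Schoenfeld: for $x \ge 2$,
\begin{align*}
\prod_{p\le x}\Big(1-\tfrac{1}{p}\Big)^{-1} \ < \ e^{\gamma} \log x \Big(1 + \tfrac{1}{2\log^2 x}\Big),
\end{align*}
and for $x \ge 285$,
\begin{align*}
\prod_{p\le x}\Big(1-\tfrac{1}{p}\Big)^{-1} \ > \ e^{\gamma} \log x \Big(1 - \tfrac{1}{2\log^2 x}\Big).
\end{align*}
Taking reciprocals and substituting into the definitions of $\lambda_k$ and $\mu_k$ reduces each of the four claims to a monotone elementary inequality in $\log p_k$. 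For example, $\lambda_k < 1.5/\log p_k$ for $k\ge 62$, where the hypothesis $p_k \ge 293$ guarantees applicability of the second Rosser--Schoenfeld inequality, becomes
\begin{align*}
\frac{2.625\, e^{-\gamma}}{1 - 1/(2\log^2 p_k)} \ < \ 1.5,
\end{align*}
which one checks holds as soon as $\log p_k > 5.36$. The lower bounds $\lambda_k > 1.45/\log p_k$ and $\mu_k > 1.65/\log p_k$ proceed analogously, using the first Rosser--Schoenfeld estimate, which imposes only the mild restriction $p_k \ge 2$.

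It remains to dispatch the small-$k$ cases not covered by the threshold. The defining values $\lambda_1 = 1.2$, $\lambda_2 = 0.8$, and $\mu_1 = 8/7$ satisfy the upper bounds by a trivial numerical check. For the intermediate range, I would simply tabulate $\lambda_k \log p_k$ and $\mu_k \log p_k$ directly: the $\lambda_k$ inequalities and the $\mu_k$ lower bound require only a modest finite check (a few dozen primes). The principal obstacle is the upper bound $\mu_k < 1.7/\log p_k$: since $3 e^{-\gamma} \approx 1.6844$ sits near the top of $(1.65, 1.7)$, the Rosser--Schoenfeld threshold gets pushed out to $p_k \approx 1600$, so several hundred values must be verified numerically. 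This is still a finite computation, and if one wishes to shorten it, the sharper effective Mertens bounds of Dusart reduce the required threshold substantially. Either way, no conceptual difficulty remains beyond the arithmetic bookkeeping.
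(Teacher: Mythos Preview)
Your approach is essentially identical to the paper's: verify numerically for small $p_k$ (the paper goes up to $p_k\le 2000$) and apply the explicit Rosser--Schoenfeld Mertens bounds beyond that. One small correction: you have the validity ranges of your two displayed inequalities interchanged---it is the \emph{upper} bound on $\prod_{p\le x}(1-1/p)^{-1}$ that requires $x$ large (at $x=5$ the left side is $15/4=3.75$ while the right side is about $3.42$), whereas the lower bound holds for all $x>1$---but this does no damage, since your finite numerical check covers the affected range regardless.
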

\begin{proof}
One can verify the lemma for $p_k \le 2{,}000$ by direct computation. 
For larger $p_k$ we use (3.25) of Rosser and Schoenfeld \cite{RS} with the Euler--Mascheroni constant $\gamma = 0.57721...$, getting
\[
\lambda_k \ge \frac{2.625 e^{-\gamma}}{\log p_{k}} \Bigl(1 - \frac{1}{2\log^2 p_{k}}\Bigr) \ge\frac{2.625 e^{-0.57722}}{\log p_k} \Bigl(1 - \frac{1}{2\log^2 2{,}000} \Bigr) \ge \frac{1.45}{\log p_k},
\]
which gives the lower bound for $\lambda_k$.  The lower bound for $\mu_k$ follows in the same way.
For the upper bound, 
by (3.26) of
Rosser and Schoenfeld \cite{RS} we have
\[
\lambda_k<\frac{2.625e^{-.57721}}{\log p_k}\Big(1+\frac1{2\log^22{,}000}\Big)
<\frac{1.5}{\log p_k}.
\]
Again, the upper bound for $\mu_k$ follows in the same way.
This completes the proof.
\end{proof}

\begin{lem} \label{primeineq2}
For $0 < \lambda < 1$ and $x \ge 41$,
\[
x^{1 - \lambda} \Bigl(1 - \frac{1}{\log x}\Bigr) \le \sum_{p \le x} \frac{\log p}{p^\lambda} \le \frac{1.01624}{1 - \lambda} x^{1 - \lambda}.
\]
\end{lem}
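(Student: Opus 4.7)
The plan is to reduce both inequalities to Chebyshev-type bounds on $\theta(t)=\sum_{p\le t}\log p$, via partial summation for the upper bound and a trivial monotonicity estimate for the lower bound.

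First, apply Abel summation:
\[
\sum_{p\le x}\frac{\log p}{p^\lambda} \ = \ \frac{\theta(x)}{x^\lambda} \ + \ \lambda\int_2^x \frac{\theta(t)}{t^{\lambda+1}}\,dt.
\]
For the upper bound, substitute the Rosser--Schoenfeld estimate $\theta(t)\le 1.01624\,t$ (valid for all $t>0$, from \cite{RS}). This gives
\[
\sum_{p\le x}\frac{\log p}{p^\lambda} \ \le \ 1.01624\,x^{1-\lambda} \ + \ 1.01624\,\lambda \int_2^x t^{-\lambda}\,dt.
\]
Since $0<\lambda<1$, the integral equals $(x^{1-\lambda}-2^{1-\lambda})/(1-\lambda)\le x^{1-\lambda}/(1-\lambda)$, so
\[
\sum_{p\le x}\frac{\log p}{p^\lambda} \ \le \ 1.01624\,x^{1-\lambda}\Bigl(1+\frac{\lambda}{1-\lambda}\Bigr) \ = \ \frac{1.01624}{1-\lambda}\,x^{1-\lambda},
\]
which is the desired upper bound.

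For the lower bound, observe that $p^{-\lambda}\ge x^{-\lambda}$ for every prime $p\le x$, so
\[
\sum_{p\le x}\frac{\log p}{p^\lambda} \ \ge \ \frac{\theta(x)}{x^\lambda}.
\]
Hence it suffices to show $\theta(x)\ge x(1-1/\log x)$ for all $x\ge 41$. This is the classical lower bound of Rosser and Schoenfeld \cite{RS}: their effective estimates yield the inequality for all $x\ge x_0$ with some modest explicit $x_0$, and the residual range $41\le x<x_0$ is handled by direct computation, using that $\theta(x)$ is constant between consecutive primes so one only needs to check values of $x$ equal to each prime in the range.

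The main obstacle is simply locating (or lightly reproving) the correct Rosser--Schoenfeld inequality $\theta(x)\ge x(1-1/\log x)$ over the full range $x\ge 41$; the analytic step via partial summation is routine and in fact the upper bound drops out sharply with the constant $1.01624/(1-\lambda)$ matching the theorem exactly.
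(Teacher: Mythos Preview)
Your proof is correct and essentially identical to the paper's: both apply the Abel summation identity $\sum_{p\le x}(\log p)p^{-\lambda}=\theta(x)x^{-\lambda}+\lambda\int_2^x\theta(t)t^{-\lambda-1}\,dt$, use Rosser--Schoenfeld's $\theta(t)<1.01624\,t$ for the upper bound, and obtain the lower bound from $\theta(x)>x(1-1/\log x)$ (your monotonicity step $p^{-\lambda}\ge x^{-\lambda}$ is equivalent to discarding the nonnegative integral term). One small note: the inequality $\theta(x)>x(1-1/\log x)$ for $x\ge41$ is exactly (3.16) of \cite{RS}, so no residual computation below some $x_0$ is needed.
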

\begin{proof}
By partial summation,
\[
\sum_{p \le x} \frac{\log p}{p^\lambda} = \int_{2^-}^{x} \frac{d \theta(u)}{u^\lambda} = \frac{\theta(x)}{x^\lambda} + \lambda \int_{2}^{x} \frac{\theta(u)}{u^{\lambda + 1}} du
\]
where $\theta(x) = \sum_{p \le x} \log p$. The lemma follows from (3.16) and (3.32) in Rosser and Schoenfeld
\[
x \Bigl(1 - \frac{1}{\log x}\Bigr) < \theta(x) \; \text{ for } \; x \ge 41
\]
and
\[
\theta(x) < 1.01624 x \; \text{ for } \; x > 0.
\]
\end{proof}

For a set $A$ of integers, let ${\mathcal P}(A)$ denote the set of primes that divide some member of $A$.
\begin{lem}
\label{Ysmall}
Let $A$ be an {\rm lcm} $k$-primitive set with $k\ge2$.
 If $|\mathcal P(A)|\le k$, then $|A|\le|\mathcal P(A)|$ and for all $\lambda\ge 0$,
\begin{align*}
\sum_{n \in A} n^{-\lambda}
\ \le \ \sum_{p\in \mathcal P(A)}p^{-\lambda}.
\end{align*}
Also, if $k<|\mathcal P(A)|<2k$, then $|A|\le |\mathcal P(A)|+1$.
\end{lem}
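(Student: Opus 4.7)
My plan is to handle the two assertions in sequence, using a maximum-exponent argument in both. For the first, I would directly establish $|A|\le r:=|\mathcal P(A)|$, then upgrade this to the sum inequality via Hall's marriage theorem. For the second, I would argue by contradiction using a refinement of the maximum-exponent idea applied to two elements that are never a strict maximum.

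For the first claim, enumerate $\mathcal P(A)=\{p_1,\dots,p_r\}$ and, for each $i$, choose $a_i\in A$ with $v_{p_i}(a_i)=m_i:=\max_{a\in A}v_{p_i}(a)$. The set $\{a_1,\dots,a_r\}$ has at most $r\le k$ distinct elements, so if $|A|>r$, any $a$ outside this set would satisfy $a\mid\mathrm{lcm}(a_1,\dots,a_r)$, contradicting lcm $k$-primitivity. For the sum inequality, let $P(n):=\{p\in\mathcal P(A):p\mid n\}$. Hall's condition for the family $\{P(n)\}_{n\in A}$ reads $|\mathcal P(S)|\ge|S|$ for every $S\subseteq A$, and this follows from applying the cardinality bound to $S$ itself, which remains lcm $k$-primitive with $|\mathcal P(S)|\le k$. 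Hall's marriage theorem then yields an injection $\phi:A\hookrightarrow\mathcal P(A)$ with $\phi(n)\mid n$, so $n\ge\phi(n)$ and the sum inequality follows by termwise comparison.

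For the second claim, suppose $k<r<2k$ and, for contradiction, that $|A|\ge r+2$. Set $M_p:=\{a\in A:v_p(a)=m_p\}$, and let $U:=\{a\in A:M_p=\{a\}\text{ for some }p\}$ denote the ``strict-max'' elements. Since each prime contributes at most one strict max, $|U|\le r$, so $|A\setminus U|\ge 2$; I would then pick distinct $a,a'\in A\setminus U$ and partition $\mathcal P(A)=G\sqcup L\sqcup E$ by the sign of $v_p(a)-v_p(a')$. The antichain property (inherited from lcm $1$-primitivity) forces $|G|,|L|\ge 1$, while $|G|+|L|\le r<2k$ forces $\min(|G|,|L|)\le k-1$; assume WLOG $|G|\le k-1$. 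For each $p\in G\cap P(a)$, the assumption $a\notin U$ guarantees $M_p\setminus\{a\}\ne\emptyset$, so I can pick $u_p\in M_p\setminus\{a\}$. Then $a'$ covers every prime of $a$ outside $G$ (since $v_p(a')\ge v_p(a)$ there) and each $u_p$ covers the corresponding $p\in G\cap P(a)$, so $\{a'\}\cup\{u_p:p\in G\cap P(a)\}$ is a cover of $a$ by at most $1+|G|\le k$ distinct elements of $A\setminus\{a\}$, contradicting lcm $k$-primitivity. The critical ingredient is the bound $|U|\le r$: combined with $r<2k$, it simultaneously makes $|A\setminus U|\ge 2$ and forces $\min(|G|,|L|)\le k-1$, which together drive the contradiction.
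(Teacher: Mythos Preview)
Your proof is correct and follows the same overall strategy as the paper: a maximum-exponent argument for the cardinality bound, an injection $A\hookrightarrow\mathcal P(A)$ with $\phi(n)\mid n$ for the sum inequality, and a covering argument for the second claim. The one genuine tactical difference is how you produce the injection. The paper observes that once $|A|\le k$, each $n\in A$ satisfies $n\nmid\mathrm{lcm}(A\setminus\{n\})$, so there is a prime $p\mid n$ with $v_p(n)$ strictly larger than $v_p(m)$ for every other $m$, and $n\mapsto p$ is already injective. You instead verify Hall's condition by reapplying the cardinality bound to every subset $S\subseteq A$ (each such $S$ is still lcm $k$-primitive with $|\mathcal P(S)|\le k$) and invoke the marriage theorem. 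Both routes work; the paper's is more self-contained, while yours has the appealing feature of recycling the first assertion wholesale. For the second claim, your strict-max set $U$ and trichotomy $G\sqcup L\sqcup E$ are a minor variant of the paper's tie-broken set $A^*=\{n_p\}$ and dichotomy $\{p:v_p(n)\ge v_p(n')\}$ versus its complement; the resulting covers of size at most $1+\lfloor r/2\rfloor\le k$ are essentially identical.
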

\begin{proof}
Let $v_p(n)$ denote the exponent on $p$ in the prime factorization of $n$, so that $p^{v_p(n)}\parallel n$.
For each $p\in \mathcal P(A)$ let $n_p$ be the element $n\in A$ with $v_p(n)$ maximal (breaking ties arbitrarily), and let $A^*=\{n_p:p\in\mathcal P(A)\}$.  Thus
$|A^*|\le |\mathcal P(A)|$.

Suppose $|\mathcal P(A)|\le k$.  Then any $n\in A\setminus A^*$ would satisfy $n \mid {\rm lcm}(A^*)$, contradicting $A$ as lcm $k$-primitive.  Thus,
$A^*=A$ and $|A|\le|\mathcal P(A)|\le k$.
Next, $|A|\le k$ implies each $n\in A$ has $n\nmid {\rm lcm}(A\setminus\{n\})$. Thus, each $n\in A$ has a prime factor $p$ with $v_{p}(n) > v_{p}(m)$ for all $m\in A\setminus\{n\}$, so the map, call it $f$, where $f(n)=p\mid n$ is injective on $A$. Hence we conclude
\begin{align*}
\sum_{n \in A} n^{-\lambda} 
\ \le \ \sum_{n\in A}f(n)^{-\lambda} 
\ \le \ \sum_{p\in \mathcal P(A)}p^{-\lambda}.
\end{align*}

Also, suppose $N=|\mathcal P(A)|$, $k<N<2k$, and there exist distinct $n,n'\in A\setminus A^*$. Without loss, the subset $P=\{p\in\mathcal P(A) : v_p(n) \ge v_p(n')\}$ contains at least half of the primes in $\mathcal P(A)$, i.e., $|P|\ge \lceil \frac{N}{2}\rceil$. Hence
\begin{align*}
n' \ \mid \ {\rm lcm}(\{n_p: p\notin P\}\cup \{n\}),
\end{align*}
which is an lcm of $1+N-\lceil N/2\rceil$ elements.   It is easy to see this number is $\le k$, thus contradicting $A$ as lcm $k$-primitive. This implies $|A|\le N+1$.
\end{proof}
\section{Theorem for $k$-primitive sets}
\label{sec:main}

In this section we prove Theorem  \ref{thm:main}.  
Recall the numbers $\lambda_k$ in Lemma \ref{primeineq}.  By that lemma it suffices to prove the following theorem.
\begin{thm} 
\label{thm:actuallymain}
Let $A$ be a $k$-primitive set.  For each $k \ge 1$ 
we have
\begin{align}
\label{mainineq}
\sum_{\substack{a \in A\\a\le x}} a^{-\lambda_k} \ \le \ \sum_{\substack{p \in \mathcal{P}(A)\\p\le x}} p^{-\lambda_k}.
\end{align}
for any $x>1$.
\end{thm}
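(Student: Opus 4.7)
The plan is to prove the theorem by induction on $k$, with base cases $k=1$ (via the Banks--Martin result, since $\lambda_1 = 1.2 > \tau_1 \approx 1.14$) and $k=2$ (via Theorem \ref{CLPthm}, since $\lambda_2 = 0.8 > 0.7983$). For the inductive step, assume $k \ge 3$ and that the inequality holds at $k-1$. First I would carry out standard reductions: partition $A$ into primes $S$ and composites $T$; by primitivity $S \cap \mathcal{P}(T) = \emptyset$, so their contributions cancel and the problem reduces to proving the inequality for a $k$-primitive set $T$ of composites. Next, via a prime-swap argument one may assume $\mathcal{P}(T) = \mathbb{P}\cap(1,Y]$ is an initial segment, and by deleting any prime $p \in \mathcal{P}(T)$ with $\sum_{t \in T_p} t^{-\lambda_k} \le p^{-\lambda_k}$ one may assume $|T_p| \ge 2$ throughout. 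If $Y \le p_k$ then Lemma \ref{Ysmall} closes the argument, so assume $Y \ge p_{k+1}$.

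Next I would pick a smoothness parameter $\theta = \theta_k$ of size roughly $1/p_k$ and split $T$ according to whether $P(t) < t^\theta$ or $P(t) \ge t^\theta$. For the smooth part, I would prove a counting lemma of the form: if every $t \in T$ satisfies $P(t) < t^\theta$, then $|T \cap [1,z]| \le z^{1/k + \theta}$. The idea is to factor each $t = m_1(t) \cdots m_{\ell_t}(t)$ with $\ell_t \le k$ and each $m_i(t)$ lying in a window $(z^{1/k},z^{1/k+\theta}]$; then $k$-primitivity forces each $t$ to possess a factor $m_i(t)$ that does not coincide with any factor of any other element in $T$, giving an injection into the window (with some care for the prime-power factors, where Lemma \ref{Ysmall} must be invoked to rule out repetition). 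Partial summation then bounds the smooth contribution by an acceptable quantity involving $p_{k+1}^{-\nu(\lambda_k - 1/k - \theta)}$ with $\nu = \lfloor 1/\theta\rfloor+1$.

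For the non-smooth part $T_\theta$, I would apply the map $t \mapsto t/P(t)$; the main obstacle is that the naive image need not be $(k-1)$-primitive. Following the strategy of Lemma \ref{derived}, I would split $T_\theta = T' \cup T''$, where $T'$ collects the $t$ whose reduced form $t/P(t)$ already witnesses a failure of $(k-1)$-primitivity in the reduced set. For $t \in T'$ one shows $P(t)^2 \mid t$ and replaces $t/P(t)$ with $P(t)^{v_{P(t)}(t)}$; the modified set $\mathcal{D}$ is genuinely $(k-1)$-primitive and in bijection with $T_\theta$. Since $P(t) \ge t^\theta$ gives $t^{-\lambda_k} \le (t/P(t))^{-\lambda'}$ for $\lambda' = \lambda_k/(1-\theta) = \lambda_{k-1}$, the induction hypothesis applied to $\mathcal{D}$ at exponent $\lambda'$ yields the non-smooth bound, up to a small $T'$-correction controlled by the observation that $v_{P(t)}(t) \ge 2$.

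Finally, I would combine the two estimates and show that the total quantity $I_{\lambda_k} = \sum_{t \in T} t^{-\lambda_k} - \sum_{p \le Y} p^{-\lambda_k}$ is negative. The essential negative contribution comes from $\sum_{p \le Y}(p^{-\lambda'} - p^{-\lambda_k})$, which by the mean value theorem and Lemma \ref{primeineq2} is of order $-\theta \lambda' p_k^{1-\lambda_k}/\log p_k$; this must dominate the smooth error, the $T'$-correction, and the smoothness loss. The hardest part will be verifying this final inequality holds uniformly in $k$: I expect to handle small $k$ (say $3 \le k \le 99$) by direct numerical computation, and for large $k$ to exploit the careful calibration $\lambda_k \sim 2.625 e^{-\gamma}/\log p_k$ from Lemma \ref{primeineq}, where the constant $2.625$ is tuned precisely so that the dominant negative term outweighs all positive contributions, which shrink super-polynomially in $p_k$.
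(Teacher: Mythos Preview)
Your overall architecture---induction on $k$, reduction to composites with $|T_p|\ge 2$ and $\mathcal{P}(T)$ an initial segment, a smooth/non-smooth split at threshold $\theta_k\approx 1/p_k$, a derived $(k-1)$-primitive set for the non-smooth part, and numerical verification for small $k$---matches the paper exactly. The substantive technical divergence is that the paper works throughout with the largest prime \emph{power} factor $Q(t)$ rather than the largest prime $P(t)$. This single change streamlines both halves. For the smooth part, decomposing $t$ into its maximal prime-power factors $q_1>\cdots>q_r$ and grouping them into blocks $m_i(t)$ yields factors that are automatically pairwise coprime, so the injection follows directly from the lcm $k$-primitive property with no side cases. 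Your parenthetical ``with some care for the prime-power factors, where Lemma~\ref{Ysmall} must be invoked'' conceals real work: when one uses $P(t)$, the blocks $m_i(t)$ can coincide (think of $t$ a high prime power), and Lemma~\ref{Ysmall} does not by itself resolve this---one needs a genuine case split separating blocks that are prime powers from those that are not, and a multiplicity-counting argument for the former. For the non-smooth part, the paper's decomposition $T_\theta=T'\cup T''$ is based simply on whether $Q(t)$ divides some other element; the map sending $t\in T'$ to $Q(t)$ and $t\in T''$ to $t/Q(t)$ is then injective with lcm $(k-1)$-primitive image, and since $Q(t)$ is automatically a proper prime power on $T'$ (if $Q(t)$ were prime then $|T_{Q(t)}|\ge 2$ would force $t\in T''$), the correction term is immediately $\sum_p(p^{-2\lambda}-p^{-2\lambda'})$ over pairwise coprime contributions. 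Your $P(t)$-based route with the $v_{P(t)}(t)\ge 2$ observation reaches the same destination but through a noticeably more intricate argument establishing $(k-1)$-primitivity of the modified set. A further dividend of the $Q(t)$ choice is that both lemmas hold verbatim for lcm $k$-primitive sets, so the proof of Theorem~\ref{thm:lcm} in the next section is nearly a reprise with adjusted constants.
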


Since $\lambda_1=1.2, \lambda_2=0.8$, the theorem holds for $k=1,2$, so we may assume that $k\ge3$ and that the theorem holds for $(k-1)$-primitive sets.

 We partition $A$ into primes $S$ and composites $T$. Note by primitivity, the primes in $S$ and $\mathcal P(T)$ are disjoint. We thus may cancel the contribution of $p\in S$ from both sides of \eqref{mainineq} and so reduce
Theorem~\ref{thm:actuallymain} to the case $A=T$ where every member is composite.

We may assume that
\begin{align}
\label{eq:ptp2}
\sum_{t\in T_p}t^{-\lambda} \ > \ p^{-\lambda} \quad \text{for all}\quad p \in \mathcal{P}(T),
\end{align}
since if this fails for some $p$, the theorem
for $T\setminus T_p$ implies the theorem for $T$.  
An immediate consequence is that
\begin{align}
\label{two}
|T_p| \ge 2 \quad \text{for all}\quad p \in \mathcal{P}(T).
\end{align}

Further, it suffices to assume that $\mathcal{P}(T)$ consists of an initial list of primes, say
\begin{align}
 \mathcal{P}(T) = \mathbb{P} \cap (1, Y] \quad \text{for some}\quad Y\ge2.
\end{align}
Indeed, if not, suppose $q$ is the smallest prime outside $\mathcal{P}(T)$, and let $p\in\mathcal{P}(T)$ be the smallest prime with $p>q$. Then by \eqref{eq:ptp2},
\begin{align*}
0 \ < \ (p/q)^\lambda\bigg(\sum_{t\in T_p}t^{-\lambda} \ - \ p^{-\lambda}\bigg) \ \le \  \sum_{t'\in T'_q}(t')^{-\lambda} \ - \ q^{-\lambda},
\end{align*}
where $T'$ is the ($k$-primitive) image of $T$ under the automorphism of $\N$ induced by swapping $q\leftrightarrow p$. Hence the proof for $T$ will follow from that of $T'$.

For an integer $t>1$ let $Q(t)$ denote the
largest prime power factor of $t$, which is possibly a prime to the first power.
We first handle those $t \in T$ with $Q(t) < t^{\theta}$ for an appropriate choice of $\theta$.  

\begin{lem} \label{smooth2}
Let $k\ge2$ and let $0< \theta\le 1/k$.
Suppose $T$ is {\rm lcm} $k$-primitive with $Q(t) < t^{\theta}$ for each $t \in T$. Let $z\ge 2$, and let $N(z)$ be the number of members of $T$ up to $z$. Then
\begin{align*}
N(z) \ \le \ z^{\frac{1}{k} + \theta}.
\end{align*}
\end{lem}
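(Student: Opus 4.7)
The plan is to greedily factor each $t \in T\cap(1,z]$ into at most $k$ pairwise coprime divisors, each of size $\le z^{1/k+\theta}$, and then invoke the {\rm lcm} $k$-primitive property to extract an injection from $T\cap(1,z]$ into the integers up to $z^{1/k+\theta}$.

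First I would decompose $t$: writing the canonical factorization $t=q_1^{a_1}\cdots q_r^{a_r}$, each prime power factor satisfies $q_i^{a_i}\le Q(t)<t^\theta\le z^\theta$. I would then scan these prime powers in some order and accumulate them into consecutive pieces $m_1(t),\dots,m_{\ell_t}(t)$, closing off the current piece and starting a new one whenever appending the next prime power would push its value above $z^{1/k+\theta}$. Because each prime power is $<z^\theta$, closure can occur only after the running product already exceeds $z^{1/k}$, so every completed piece satisfies $z^{1/k}<m_i(t)\le z^{1/k+\theta}$, while the last piece is at worst $\le z^{1/k+\theta}$. Combining $t\le z$ with $t\ge\prod_{i<\ell_t}m_i(t)>z^{(\ell_t-1)/k}$ then forces $\ell_t\le k$. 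By construction the $m_i(t)$ are pairwise coprime, since distinct prime powers of $t$ involve distinct primes and each lands in exactly one piece.

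The key step, which I expect to be the main subtlety, is to show that for every $t\in T$ some piece $m_i(t)$ fails to coincide with any $m_j(s)$ for $s\in T\setminus\{t\}$. Suppose to the contrary that for each $i\le \ell_t$ there exists $s_i\in T\setminus\{t\}$ with $m_i(t)=m_{j_i}(s_i)$, so in particular $m_i(t)\mid s_i$. Because the $m_i(t)$ are pairwise coprime, their product $t$ then divides $\mathrm{lcm}(s_1,\dots,s_{\ell_t})$, and $\{s_1,\dots,s_{\ell_t}\}\subseteq T\setminus\{t\}$ has $\le \ell_t\le k$ distinct elements, contradicting that $T$ is {\rm lcm} $k$-primitive. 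This is exactly the point at which pairwise coprimality matters: without it the $m_i(t)$'s dividing distinct members would give $t\mid s_1\cdots s_{\ell_t}$ but not necessarily $t\mid \mathrm{lcm}(s_1,\dots,s_{\ell_t})$, so the lcm (rather than product) hypothesis would not suffice.

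Picking a distinguished $m_i(t)$ for each $t$ then gives an injection from $T\cap(1,z]$ into $\{1,2,\dots,\lfloor z^{1/k+\theta}\rfloor\}$, yielding $N(z)\le z^{1/k+\theta}$ as required. The one edge case worth noting is $t\le z^{1/k+\theta}$, where the greedy process terminates with $\ell_t=1$ and $m_1(t)=t$; uniqueness of this $m_1(t)$ follows immediately from the primitivity of $T$, since $m_j(s)=t$ would force $t\mid s$ and hence $s=t$.
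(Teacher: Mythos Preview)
Your proof is correct and follows essentially the same approach as the paper: decompose each $t$ into at most $k$ pairwise coprime pieces of size $\le z^{1/k+\theta}$ using the prime power factorization and a greedy grouping, then use the lcm $k$-primitive property together with coprimality to show each $t$ has a ``unique'' piece, giving the injection. The only cosmetic differences are that the paper phrases the greedy rule as ``accumulate until the running product exceeds $z^{1/k}$'' rather than ``until appending would exceed $z^{1/k+\theta}$'' (equivalent since each prime power is $<z^\theta$), and the paper does not single out the case $\ell_t=1$, which is already absorbed by the main argument.
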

\begin{proof}
If $t\le z^{1/k}$, let $m_1(t)=t$.  Now suppose that $t>z^{1/k}$ and decompose
$t = q_1 q_2 \cdots q_r$ into its prime powers $q_1>\cdots> q_r$.
By assumption, $q_1 < t^{\theta}$. Consider $q_1 \cdots q_j \le z^{1/k}$ with $j$ maximal. Then $m_1(t) := q_1\cdots q_{j+1}$ lies in the interval $(z^{1/k},z^{1/k + \theta}]$. In this way we may split $t$ into $l_t \le k$ pairwise coprime factors
\begin{equation} \label{tfactor}
t = q_1 q_2 \cdots q_r = m_1(t) \cdots m_{l_t}(t)
\end{equation}
with each $m_i(t) \le z^{1/k+\theta}$.

Now observe each $t\in T$ has some factor $m_i(t)$ which is distinct from all other factors $m_j(s)$, $s\in T\setminus \{t\}$. Indeed, if not, then each factor of $t$ has $m_i(t) = m_{j_i}(t_i)$ for some $t_i\in T\setminus \{t\}$ (not neccessarily distinct). And since the factors $m_i(t)$ are pairwise coprime,
\begin{align*}
t=m_1(t)\cdots m_{l_t}(t)\mid{\rm lcm}\,[t_1,\dots,t_{l_t}],
\end{align*}
contradicting $T$ as lcm $k$-primitive.

Hence we have a 1-1 map $g:T\to \N$ via $g(t) = m_i(t)$. And since $m_i(t)\le z^{\frac1k+\theta}$, we conclude $|T| = |g(T)| \le z^{\frac1k+\theta}$.
\end{proof}

We now fix a choice for $\theta=\theta_k$.
Let
\[
\theta_k=\frac1{p_k}\ \hbox{ for }\ k\ne3\ \hbox{ and }\ \theta_3=\frac1{8}.
\]
Further, let $\nu_k=1/\theta_k$, so that
\[
\nu_k=p_k\ \hbox{ for }\ k\ne3\ \hbox{ and }\ \nu_3=8.
\]
With these choices we have 
$$
\lambda_k=2.4\prod_{j\le k}(1-\theta_j).
$$ 
Note that
if $Q(t) < t^{\theta_k}$, then $t$ must have at least $\nu_k+1$ distinct prime factors.
Let $P(t)$ denote the
largest prime dividing $t$, so that 
$$
p_{\nu_k+1}\le P(t)\le Q(t)<t^{\theta_k}
~\hbox{ which implies }~t>p_{\nu_k+1}^{\nu_k}.
$$
%
Thus, with $\theta=\theta_k$, $\nu=\nu_k$,  and $\lambda>\frac1k+\theta$,
\begin{equation}
    \label{small2primenew}
    \sum_{\substack{t\in T\\Q(t)<t^{\theta}}}\frac1{t^\lambda}\ 
   ~=~\int_{p_{\nu+1}^{\nu}}^{\infty}\frac{\lambda}{z^{1+\lambda}}N(z)\,dz 
 ~\le ~ \frac{ \lambda}{\lambda - \frac{1}{k} - \theta} \,p_{\nu+1}^{-\nu (\lambda - \frac{1}{k} - \theta)},
\end{equation}
by partial summation and Lemma \ref{smooth2}.

\medskip

\begin{lem}
\label{derived2}
Let $k\ge2$ and let $T$ be an {\rm lcm} $k$-primitive set of composite numbers. Decompose $T = T'\cup T''$, where $t\in T''$ if there exists another $s\in T$ with $Q(t)\mid s$; else $t\in T'$.
Define the map $f:T\to\N$ via
\[
f(t) = \begin{cases}
Q(t) & t\in T'\\
t/Q(t) & t\in T''
\end{cases}
\]
Then $f$ is $1$ to $1$ and $f(T)$ is an {\rm lcm} $(k-1)$-primitive set.  Further, the members of $f(T')$ are pairwise coprime proper prime powers.
\end{lem}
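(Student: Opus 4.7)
The plan is to verify the three claims by a short case analysis, exploiting two complementary facts built into the definition: (a) for $t \in T'$, no other element of $T$ is divisible by $Q(t)$; (b) for $t \in T''$, some $s \in T \setminus \{t\}$ does satisfy $Q(t) \mid s$. A third key ingredient is the coprimality $\gcd(Q(t), t/Q(t)) = 1$, which holds because $Q(t)=P(t)^{v_{P(t)}(t)}$ captures the full $P(t)$-part of $t$.

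For injectivity of $f$, I would assume $f(t_1)=f(t_2)$ with $t_1 \ne t_2$ and split into three cases. If $t_1,t_2 \in T'$, then $Q(t_1)=Q(t_2)\mid t_2$ contradicts (a). If $t_1 \in T'$ and $t_2 \in T''$, then $Q(t_1)=t_2/Q(t_2)\mid t_2$ again contradicts (a). If $t_1,t_2 \in T''$ with common value $a:=t_1/Q(t_1)=t_2/Q(t_2)$, I would pick $s_2 \in T \setminus \{t_2\}$ with $Q(t_2)\mid s_2$ via (b); since $a \mid t_1$ and $\gcd(a,Q(t_2))=1$, this forces $t_2 \mid {\rm lcm}(t_1,s_2)$, an lcm over at most two distinct members of $T \setminus \{t_2\}$, contradicting lcm $k$-primitivity for $k \ge 2$.

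For lcm $(k-1)$-primitivity of $f(T)$, suppose $f(t) \mid {\rm lcm}(f(t_1),\dots,f(t_{k-1}))$ with $t,t_1,\dots,t_{k-1}$ distinct. When $t \in T'$, $f(t)=Q(t)$ is a prime power and so must already divide some $f(t_i) \mid t_i$, contradicting (a) since $t_i \ne t$. When $t \in T''$, I would invoke (b) to produce $s \in T \setminus \{t\}$ with $Q(t)\mid s$; combining $Q(t)\mid s$ with $t/Q(t) \mid {\rm lcm}(f(t_1),\dots,f(t_{k-1})) \mid {\rm lcm}(t_1,\dots,t_{k-1})$ and using coprimality yields $t \mid {\rm lcm}(s,t_1,\dots,t_{k-1})$, an lcm over at most $k$ distinct members of $T \setminus \{t\}$, contradicting the lcm $k$-primitivity of $T$.

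For the structural claim on $f(T')$, members are prime powers by construction of $Q$. Pairwise coprimality: if $Q(t_1)$ and $Q(t_2)$ for distinct $t_1,t_2 \in T'$ shared a prime, both would be powers of it, and the smaller would divide the larger and hence the corresponding element of $T$, contradicting (a). For properness (exponent $\ge 2$), I would invoke \eqref{two}: if $Q(t)=p$ were itself a prime, then $|T_p| \ge 2$ supplies $s \in T \setminus \{t\}$ with $p=Q(t)\mid s$, again contradicting (a). I expect no deep obstacle; the main point requiring care is the bookkeeping in the lcm $(k-1)$-primitivity step, ensuring that $\{s,t_1,\dots,t_{k-1}\}$ contributes between $k-1$ and $k$ distinct elements of $T \setminus \{t\}$, which still falls within the scope of lcm $k$-primitivity of $T$.
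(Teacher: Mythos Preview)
Your proposal is correct and follows essentially the same approach as the paper: the same case split on $T'$ versus $T''$, the same use of the coprimality $\gcd(Q(t),t/Q(t))=1$, the same contradiction via $t\mid{\rm lcm}(s,t_1,\dots,t_{k-1})$, and the same appeal to \eqref{two} for properness. The only cosmetic difference is that you spell out the injectivity argument as three cases, whereas the paper compresses the two cases with $t\in T'$ or $t'\in T'$ into a single symmetric observation before treating the $T''$--$T''$ case.
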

\begin{proof}
First, the map $f$ is 1-1. Indeed, suppose $f(t)=f(t')$ for some $t,t'\in T$. If $t\in T'$ then $Q(t) \nmid t'$, in particular $f(t) = Q(t)\neq f(t')\in \{Q(t'), t'/Q(t')\}$. Similarly, if $t\in T''$ then $Q(t)\mid s$ for some $s\in T\setminus\{t\}$. Thus
$1=\gcd(Q(t),t/Q(t))=\gcd(Q(t),t'/Q(t'))$ implies
\begin{align*}
t = Q(t)\cdot\frac{t}{Q(t)} = Q(t)\cdot\frac{t'}{Q(t')}
\ \Big| \ \textrm{lcm}[s,t'].
\end{align*}
Thus lcm 2-primitivity of $T$ forces $t=t'$. Hence $f$ is indeed 1-1.

Next suppose $f(T)$ is not lcm $(k-1)$-primitive. Then there exist $t\in T$ and $t_1,..,t_{k-1}\in T\setminus\{t\}$ such that
\begin{align*}
f(t) \ \Big| \ \textrm{lcm}[f(t_1),\ldots,f(t_{k-1})].
\end{align*}

If $t\in T'$ then $f(t)=Q(t)$ is a prime power, so by above $Q(t) \mid f(t_i)$ for some index $i$. Thus $Q(t)\mid t_i\in T\setminus\{t\}$, which contradicts $t\in T'$.

 Similarly if $t\in T''$, then $Q(t)\mid s$ for some $s\in T\setminus\{t\}$, and so $1=\gcd(Q(t),t/Q(t))$ gives
\begin{align*}
t = Q(t)\cdot\frac{t}{Q(t)} = Q(t)f(t) \ \Big| \ \textrm{lcm}[s,t_1,\ldots,t_{k-1}]
\end{align*}
contradicting $T$ as lcm $k$-primitive. Hence $f(T)$ is indeed lcm $(k-1)$-primitive.
That the members of $f(T')$ are pairwise coprime follows from $f(T')$ being a primitive set of prime powers.   That the members of $f(T')$ are proper prime powers follows from the fact that if $Q(t)$ is prime, then by \eqref{two}, $T_{Q(t)}$ has at least 2 elements, and so $t\in T''$.
\end{proof}

Let $T_\theta=\{t\in T:Q(t)\ge t^\theta\}$.
We apply Lemma \ref{derived2} to $T=T_\theta$.
Thus, by the induction hypothesis on the lcm $(k-1)$-primitive set $f(T_\theta)$, for $\lambda' :=\lambda_{k-1} = \frac{\lambda_k}{1-\theta}$ we have
\begin{align*}
\sum_{t\in T_\theta}f(t)^{-\lambda'}  \ = \ 
\sum_{t \in T'} Q(t)^{-\lambda'} + \sum_{t\in T''}\big(t/Q(t)\big)^{-\lambda'} \ = \  \sum_{d\in f(T_\theta)}d^{-\lambda'} \ \le \  \sum_{p \le Y} p^{-\lambda'}.
\end{align*}

Now if $Q(t) \ge t^{\theta}$, then $t / Q(t) \le t^{(1- \theta)}$ so that $t^{-\lambda} \le (t/Q(t))^{- \lambda / (1 - \theta)}=(t/Q(t))^{-\lambda'}$. Thus by the above,
\begin{align*}
\sum_{t \in T_{\theta}} t^{-\lambda} = \sum_{t \in T'} t^{-\lambda} + \sum_{t \in T''} t^{-\lambda} & \le \sum_{t \in T'} Q(t)^{-\lambda} + \sum_{t\in T''}\big(t/Q(t)\big)^{- \lambda / (1 - \theta)} \nonumber\\
& \le \sum_{t \in T'} \big(Q(t)^{-\lambda} -Q(t)^{-\lambda'} \big) \ + \ \sum_{p \le Y} p^{-\lambda'}.
\end{align*}
Thus,
\begin{equation}
\label{large2prime}
 \sum_{t \in T_{\theta}} t^{-\lambda} -\sum_{p\le Y}p^{-\lambda}<  
  \sum_{p \le Y} \bigl((p^{- 2 \lambda} - p^{-2 \lambda'}) - (p^{- \lambda} - p^{-\lambda'}) \bigr)=:S(Y),
\end{equation}
using that $f(T')$ is a set of pairwise coprime proper prime powers and $\mathcal P(T)\subset [1,Y]$.
Note that from Lemma \ref{Ysmall} we may assume that $Y\ge p_{k}$.  
\medskip

\noindent{\em Claim 1:}
The sequence $S(p_j)$ for $j\ge k$ is decreasing, so if $S(p_k)<0$, then $S(Y)<0$ for all $Y\ge p_{k}$.  
\medskip

Indeed, the terms in $S(Y)$ are
of the form $h(y,z)=y-z-(y^2-z^2)$, where $y=p^{-\lambda'}$ and $z=p^{-\lambda}$.
Note that $h(y,z)=(y-z)(1-(y+z))$ and we have
$0<y<z$.  Further, $p^{-\lambda}\le\frac13$ for
$p\ge p_k$ and $k\ge3$, which follows from Lemma \ref{primeineq} and a short calculation.
Thus, for $p\ge p_k$, the terms in $S(Y)$ are negative, establishing Claim 1.
\medskip

\noindent{\it Claim 2:}
For $k\ge 3$ we have $S(p_k)<0$ and for $k\ge200$ we have $S(p_k)<-0.015/\log p_k$.
\medskip

We verify this directly for $3\le k\le 199$, so assume now that $k\ge200$.
Let $F(\lambda)=\sum_{p\le p_{k}}(p^{-2\lambda}-p^{-\lambda})$ so that
$S(p_{k})=F(\lambda)-F(\lambda')$.
By the mean value theorem, there exists some
$\xi\in(\lambda,\lambda')$ with
\begin{align*}
F(\lambda)-F(\lambda')&=(\lambda-\lambda')F'(\xi)=(\lambda-\lambda')\sum_{p\le p_k}\big(p^{-\xi}\log p-2p^{-2\xi}\log p\big)\\
&=-\theta\lambda'\sum_{p\le p_k}\big(p^{-\xi}-2p^{-2\xi}\big)\log p
\ < \ -\theta\lambda'\sum_{p\le p_k}\big(
p^{-\lambda'} -2p^{-2\lambda}\big)\log p.
\end{align*}
Recall that  $\theta=\theta_k$,
$\lambda=\lambda_k$, and $\lambda'=\lambda_{k-1}$.
Using Lemma  \ref{primeineq2}, we thus have
\begin{align*}
S(p_k)=F(\lambda)-F(\lambda')&<-\theta\lambda'\Big(p_k^{1-\lambda'}\big(1-\frac1{\log p_k}\big)-\frac{2.03248}{1-2\lambda}p_k^{1-2\lambda}\Big)\\
&=-\lambda'p_k^{-\lambda}\Big(p_k^{\lambda-\lambda'}\Big(1-\frac1{\log p_k}\Big)-\frac{2.03248}{1-2\lambda}p_k^{-\lambda}\Big).
\end{align*}
We use $1-2\lambda>0.587$, $p_k^{\lambda-\lambda'}>1-1/p_k$,
and $e^{-1.5}<p_k^{-\lambda}<e^{-1.45}$,
which follows from Lemma \ref{primeineq}, 
to get
\begin{align}
\label{Sineq}
    S(p_k)<
-\frac{0.015}{\log p_k},\qquad\hbox{for }~ k\ge200,
\end{align}
completing the proof of Claim 2.
\medskip

By \eqref{small2primenew} and \eqref{large2prime},
\begin{equation} \label{almostdone2}
I_\lambda  = \sum_{\substack{t \in T\\Q(t) < t^{\theta}}} t^{-\lambda} + \sum_{\substack{t \in T\\Q(t) \ge t^{\theta}}} t^{-\lambda} \ - \ \sum_{p \le Y} p^{-\lambda} 
< \ \frac{ \lambda}{\lambda - \frac{1}{k} - \theta}\, p_{\nu+1}^{-\nu(\lambda - \frac{1}{k} - \theta)} 
 + S(Y).
\end{equation}
Note though that if $Y<p_{\nu+1}$, then the first term does not appear, so Claims 1 and 2 prove that $I_\lambda<0$.  So, assume that $Y=p_{\nu+1}$ in \eqref{small2primenew}.  We check numerically that $I_\lambda<0$ for $3\le k\le 199$.

It remains to show that $I_\lambda<0$ for $k\ge200$. 
Note that if $k\ge200$, then 
\[
\lambda-\frac1k-\theta>\frac{1.4}{\log p_k},\quad \frac{\lambda}{\lambda-\frac1k-\theta}<1.05,
\]
using Lemma \ref{primeineq2}.  Thus, 
\[
\frac{ \lambda}{\lambda - \frac{1}{k} - \theta}\, p_{\nu+1}^{-\nu(\lambda - \frac{1}{k} - \theta)} ~<~1.05p_{p_k+1}^{-1.4p_{k}/\log p_k}~<~1.05p_{k}^{-1.4p_{k}/\log p_k}~=~1.05e^{-1.4p_k}.
\]
As a function of $p_k$ this expression is much smaller than $0.015/\log p_k$, in fact, this is so for $p_k\ge5$.  Thus, \eqref{Sineq} shows that $I_\lambda<0$ for $k\ge200$.
This completes the proof.

\section{Theorem for ${\rm lcm}$ $k$-primitive sets}
In this section we prove Theorem \ref{thm:lcm}.  The proof largely follows from the proof for $k$-primitive sets in the previous section.  In fact, the only difference is that we start the induction at $k=2$ rather than $k=3$.
By Lemma \ref{primeineq} it suffices to prove the following theorem.
\begin{thm}
\label{thm:actuallylcm}
Recall the numbers $\mu_k$ in Lemma \ref{primeineq}.
Let $A$ be an ${\rm lcm}$ $k$-primitive set.  For each $k \ge 1$ 
we have
\begin{align}
\label{mainineq2}
\sum_{\substack{a \in A\\a\le x}} a^{-\mu_k} \ \le \ \sum_{\substack{p \in \mathcal{P}(A)\\p\le x}} p^{-\mu_k}.
\end{align}
for any $x>1$.
\end{thm}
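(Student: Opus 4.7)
The approach is to adapt the proof of Theorem \ref{thm:actuallymain} from Section \ref{sec:main}, running induction on $k$ with base case $k=1$. Since an lcm $1$-primitive set is exactly a primitive set and $\mu_1 = 8/7 = 1.14285\ldots > \tau_1 = 1.1403\ldots$, the Banks--Martin characterization \eqref{eq:tau1} together with Lemma \ref{lem:minexp} gives \eqref{mainineq2} at $k=1$. This replaces the base cases $k=1,2$ of the previous section, since Theorem \ref{CLPthm} for $2$-primitive sets does not directly extend to the lcm $2$-primitive setting (as witnessed by $\{4,6,10\}$).

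For the induction step $k \ge 2$, I would choose
\[
\theta_k \,:=\, 1 - \frac{\mu_k}{\mu_{k-1}}, \qquad \nu_k \,:=\, 1/\theta_k,
\]
so that $\theta_2 = 1/8$ (with $\nu_2=8$) and $\theta_k = 1/p_k$ (with $\nu_k = p_k$) for $k \ge 3$. These choices guarantee the exponent-matching identity $\mu_{k-1} = \mu_k/(1-\theta_k)$ that drives the reduction via Lemma \ref{derived2}. The initial reductions of Section \ref{sec:main} apply verbatim: split $A$ into primes and composites and cancel the primes to reduce to $A = T$ composite; assume \eqref{eq:ptp2} for all $p \in \mathcal P(T)$, which gives \eqref{two}; and assume $\mathcal P(T) = \mathbb P \cap (1,Y]$ is an initial segment. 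Then decompose $T$ according to whether $Q(t) \ge t^{\theta_k}$.

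For the smooth part $\{t\in T : Q(t) < t^{\theta_k}\}$, Lemma \ref{smooth2} and partial summation yield the analogue of \eqref{small2primenew}, starting from $p_{\nu_k+1}^{\nu_k}$ since any such $t$ has at least $\nu_k+1$ distinct prime factors. For the non-smooth part $T_\theta$, Lemma \ref{derived2} produces a bijection onto an lcm $(k-1)$-primitive set, to which the induction hypothesis applies at exponent $\mu_{k-1}$, yielding the analogue of \eqref{large2prime}. Combining these estimates gives the lcm counterpart of \eqref{almostdone2} with $\mu_k, \mu_{k-1}$ in place of $\lambda_k, \lambda_{k-1}$.

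It remains to verify that the resulting bound on $I_{\mu_k}$ is negative. Claims 1 and 2 from Section \ref{sec:main} carry over: the terms of $S(Y)$ are negative for $p \ge p_k$ (one checks $p_k^{-\mu_k} \le 1/3$ for $k \ge 2$ using Lemma \ref{primeineq}), and a mean-value-theorem computation produces an estimate $S(p_k) < -c/\log p_k$ for sufficiently large $k$, using $\mu_k < 1.7/\log p_k$ from Lemma \ref{primeineq}. The main obstacle will be the small-$k$ range, especially the base $k=2$ of the induction where $\mu_2 = 1$ sits exactly at the Erd\H{o}s conjecture threshold for lcm $2$-primitive sets; here and at other small values one must verify the inequality by direct numerical computation, while for large $k$ the smooth contribution decays like $e^{-cp_k}$ and is negligible against the gain of order $1/\log p_k$ from $S(p_k)$, paralleling \eqref{Sineq}.
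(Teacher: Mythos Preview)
Your proposal is correct and matches the paper's proof essentially line for line: the paper starts the induction at $k=1$ via $\mu_1=8/7>\tau_1$, takes $\theta_2=1/8$ and $\theta_k=1/p_k$ for $k\ge3$ (equivalently $\theta_k=1-\mu_k/\mu_{k-1}$), reuses Lemmas \ref{smooth2} and \ref{derived2} and the reductions of Section \ref{sec:main} verbatim, verifies the analogues of Claims 1 and 2 (obtaining $S(p_k)<-0.035/\log p_k$ for $k\ge200$), checks $2\le k\le199$ numerically, and bounds the smooth term by $1.05e^{-1.6p_k}$ for large $k$. Your remark that $\mu_2=1$ makes $k=2$ delicate is apt, though the paper simply absorbs it into the numerical range $2\le k\le199$ without special comment.
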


First note that since $\tau_1<8/7=\mu_1$, the theorem holds at $k=1$, so we may assume that $k\ge2$ and the theorem holds for lcm $(k-1)$-primitive sets.

Next note that the various reductions we made in Section \ref{sec:main} hold here, as well as Lemmas \ref{smooth2} and \ref{derived2}.
Here we have 
\[
\theta_k=1/p_k~\hbox{ for }~k\ne2,\quad \theta_2=1/8,
\]
so that for all $k\ge1$,
\[
\mu_k=\frac{16}7\prod_{j\le k}(1-\theta_k).
\]
Let $\nu_k=1/\theta_k$, so that $\nu_k=p_k$ for $k\ne2$ and $\nu_2=8$.
With these new values, we continue to have
\eqref{small2primenew} recorded anew as follows:
\begin{equation}
 \label{small2primelcm}   
   \sum_{\substack{t\in T\\Q(t)<t^{\theta}}}\frac1{t^\mu}\ 
   ~=~\int_{p_{\nu+1}^{\nu}}^{\infty}\frac{\mu}{z^{1+\mu}}N(z)\,dz 
 ~\le ~ \frac{ \mu}{\mu - \frac{1}{k} - \theta} \,p_{\nu+1}^{-\nu (\mu - \frac{1}{k} - \theta)},
\end{equation}
where $\mu=\mu_k$, $\theta=\theta_k$, $\nu=\nu_k$.

We have the analogue of \eqref{large2prime},
where $\lambda$ is replaced with $\mu=\mu_k$ and $\lambda'$ is replaced with $\mu'=\mu_{k-1}$.  In addition, we continue to have Claim 1, checking that $p^{-\mu}\le\frac13$ for $p\ge p_k$. 

However, Claim 2 needs to be verified.  As before, we check that $S(p_k)<0$ for $2\le k\le 199$.  Following the argument for $k\ge200$, we have $1-2\mu>0.528$, $p_k^{\mu-\mu'}>1-1/p_k$, and $e^{-1.7}<p_k^{-\lambda}<e^{-1.65}$, again 
following from Lemma \ref{primeineq2}.  Thus,
\begin{align*}
    S(p_k)&<-\frac{1.65e^{-1.7}}{\log p_k}\Big(\Big(1-\frac1{p_k}\Big)\Big(1-\frac1{\log p_k}\Big)-\frac{2.03248}{0.528}e^{-1.65}\Big)\\
    &<-\frac{0.035}{\log p_k},\quad\hbox{ for }~k\ge200.
\end{align*}
This is somewhat stronger than Claim 2.

We have the analogue of \eqref{almostdone2}:
\begin{equation}
    \label{almostdone3}
 I_\mu\ < \ \frac{ \mu}{\mu - \frac{1}{k} - \theta}\, p_{\nu+1}^{-\nu(\mu - \frac{1}{k} - \theta)} 
 + S(Y),   
\end{equation}
where the first term does not occur if $Y<p_{\nu+1}$.  Our goal is to show that $I_\mu<0$.  Thus, by Claims 1 and 2, we may assume that $Y=p_{\nu+1}$. We then check numerically that the bound in \eqref{almostdone3} is negative for $2\le k\le 199$.

To show that $I_\mu<0$ for $k\ge200$, note that
\[
\mu-\frac1k-\theta>\frac{1.6}{\log p_k},\quad \frac{\mu}{\mu-\frac1k-\theta}<1.05
\]
in analogy to what we had before.  Thus,
\[
\frac{\mu}{\mu-\frac1k-\theta}\,p_{\nu+1}^{-\nu(\mu-\frac1k-\theta)}<1.05e^{-1.6p_k},
\]
which is smaller than before.  Hence $I_\mu<0$ for $k\ge200$, which completes the proof.

\section{Theorem for strongly $k$-primitive sets}
\label{strong}
In this section we prove Theorem \ref{indistinct}.

As in Section \ref{sec:main} we may assume that $A=T$ consists of composite numbers,  for each $p\in\mathcal P(T)$ we have $|T_p|\ge2$, and $\mathcal P(T)$ consists of all of the primes up to some point $Y$.
Note that since $\tau^{(\rm{s})}_k\le\tau_k$ for all $k$, Theorem \ref{indistinct} follows from Theorem \ref{thm:actuallymain} when $k\le 38$.  Thus,
in the sequel, we assume that $k\ge39$ and that the theorem holds for $k-1$.

For $k\ge39$, let
\[
\lambda=\lambda_k=\frac{3\log k}k,\quad \theta=\theta_k=1-\frac{\lambda_k}{\lambda_{k-1}}.
\]
A simple calculation shows that
\[
\nu=\nu_k:=\frac1{\theta_k}>\frac{k\log(k-1)}{\log(k-1)-1}>k.
\]

Recall that $P(t)$ denotes the largest prime factor of $t$.
Let 
\[
T_0=\{t\in T:P(t)<t^\theta\}.
\]
We now prove a version of  Lemma \ref{smooth2} dealing with $T_0$.

\begin{lem} \label{smooth1}
Let $N_0(z)$ denote the number of $t\in T_0$ with
$t\le z$.  Then
\begin{align*}
N_0(z) \ \le \ z^{\frac{1}{k} + \theta}.
\end{align*}
\end{lem}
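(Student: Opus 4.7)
The plan is to adapt the proof of Lemma \ref{smooth2} to the present strong setting, replacing the decomposition of $t$ into pairwise coprime prime powers used there with a decomposition into prime factors counted with multiplicity, and replacing the appeal to lcm $k$-primitivity with an appeal to strong $k$-primitivity at the critical step.

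First, I would associate to each $t \in T_0 \cap [1,z]$ a factorization $t = m_1(t)\cdots m_{l_t}(t)$ with $l_t \le k$ and each $m_i(t) \le z^{1/k + \theta}$. If $t \le z^{1/k}$, take $l_t = 1$ and $m_1(t)=t$. Otherwise, list the prime factors of $t$ with multiplicity in any order and greedily accumulate them into blocks, finalizing a block as soon as its running product first exceeds $z^{1/k}$. Each block is a previous product $\le z^{1/k}$ multiplied by one further prime $\le P(t) < t^\theta \le z^\theta$, so each block is at most $z^{1/k + \theta}$. Since each block other than possibly the last exceeds $z^{1/k}$ and $\prod_i m_i(t) = t \le z$, one obtains $l_t \le k$.

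The crux is to show that for every $t \in T_0 \cap [1,z]$ there exists an index $i$ with $m_i(t) \ne m_j(s)$ for all $s \in (T_0 \cap [1,z]) \setminus \{t\}$ and all admissible $j$. Suppose not; then for each $i$ one could choose $s_i \in (T_0 \cap [1,z]) \setminus \{t\}$ and $j_i$ with $m_i(t) = m_{j_i}(s_i)$. Since $m_{j_i}(s_i) \mid s_i$, multiplying gives
\[
t \;=\; \prod_{i=1}^{l_t} m_i(t) \;=\; \prod_{i=1}^{l_t} m_{j_i}(s_i) \;\Bigm|\; \prod_{i=1}^{l_t} s_i,
\]
exhibiting $t$ as a divisor of a product of at most $k$ members of $T \setminus \{t\}$, with repetitions allowed. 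This contradicts strong $k$-primitivity. The resulting map sending each $t$ to a distinguishing factor $m_i(t)$ is an injection from $T_0 \cap [1,z]$ into the integers in $[1, z^{1/k+\theta}]$, yielding $N_0(z) \le z^{1/k+\theta}$.

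The only essentially new ingredient compared to Lemma \ref{smooth2} is the allowance of repeated $s_i$: precisely because strong $k$-primitivity forbids divisibility by a product of $k$ members with multiplicity, the factors $m_i(t)$ no longer need to be pairwise coprime, which is what permits the cruder prime-with-multiplicity decomposition. The only routine check is that the greedy decomposition produces at most $k$ blocks, and I do not anticipate any genuine obstacle beyond that.
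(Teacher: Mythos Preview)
Your proposal is correct and follows essentially the same argument as the paper: a greedy block decomposition of the prime factorization (with multiplicity) into at most $k$ factors each bounded by $z^{1/k+\theta}$, followed by the observation that strong $k$-primitivity forces each $t$ to have a distinguishing block, yielding the desired injection. The only cosmetic difference is that the paper orders the primes nonincreasingly while you allow any order, which is immaterial for the bound.
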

\begin{proof}
Let $t\in T_0,\,t\le z$.  If $t\le z^{1/k}$, let
$m_1(t)=t$.  Otherwise, say the prime factorization of $t$ is $p_1p_2\cdots p_r$, where $p_1\ge p_2\ge\dots\ge p_r$.  Let $j$
be minimal with $p_1\cdots p_j>z^{1/k}$.
Since all of these primes are $<t^\theta\le z^\theta$, we have $m_1(t):=p_1\cdots p_j\le z^{\frac1k+\theta}$.
Continuing in this fashion we obtain a factorization 
\[
t=p_1p_2\cdots p_r=m_1(t)m_2(t)\cdots m_{l_t}(t),\quad l_t\le k,~\hbox{ each }m_i(t)\le z^{\frac1k+\theta}.
\]
We claim that each $t$ has at least one factor $m_i(t)$ that does not appear in the analogous factorization for any other $t'\in T_0$.
Indeed, if each $m_i(t)=m_{j_i}(t'_i)$ for
some $t'_i\in T_0\setminus\{t\}$ with $j_i\le l_{t'_i}$, then $t\mid t'_1t'_2\cdots t'_{l_t}$, contradicting $T_0$ as strongly $k$-primitive.
By mapping $t$ to such a unique factor $m_i(t)$ we obtain a 1 to 1 function from $T_0$ to the integers in $(1,z^{\frac1k+\theta}]$, so proving the lemma.
\end{proof}

Because of the change in the definition of $N(z)$ we do not have \eqref{small2primenew}.  Instead, we argue as follows.
Note that every member of $T_0$ has at least $\lceil\nu\rceil$ prime factors, counted with multiplicity.
Thus, the least element of $T_0$ is at least $2^{\nu}$.  In addition, the second smallest member of $T_0$ must be $\ge3^{\nu}$.
Indeed, if there are two members smaller than this, then $P(t)<t^\theta$ implies they are both powers of 2, and hence $T_0$ is not primitive.
More generally, using Lemma \ref{Ysmall}, $T_0$ has at most $j$ members smaller than $p_{j+1}^{\nu_k}$ for each $j\le k$.  
Thus,
\begin{align} \label{smallprime}
\mathop{\sum_{t \in T_0}} \frac{1}{t^\lambda} 
&< \ \sum_{j\le k}\frac{1}{p_{j}^{\nu\lambda}} \ + 
 \int_{p_{k+1}^{\nu}}^{\infty}\frac{\lambda}{z^{1+\lambda}}N_0(z)\,dz \nonumber\\
& \ < \ \sum_{j\le k}\frac{1}{p_{j}^{\nu\lambda}} \ + 
\frac{ \lambda}{\lambda - \frac{1}{k} - \theta} \,p_{k+1}^{-\nu (\lambda - \frac{1}{k} - \theta)}
\end{align}
by partial summation and Lemma \ref{smooth1}.

In the next lemma we give a variant of Lemma \ref{derived2} in a more general setting.
\begin{lem}
\label{derived1}
Let $k\ge2$ and let $T$ be an arbitrary strongly $k$-primitive set of composite numbers such that for each prime $p\in\mathcal P(T)$, $|T_p|\ge2$.
Then the map $f:T\to\N$ given by 
$f(t) = t/P(t)$ is $1$ to $1$ and $f(T)$ is $(k-1)$-primitive.
\end{lem}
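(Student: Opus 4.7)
The plan is to prove both halves of the lemma by exploiting the hypothesis $|T_p|\ge2$: whenever the largest prime factor $P(t)$ is stripped from $t$ to form $f(t)=t/P(t)$, we can compensate by invoking another element $s\in T_{P(t)}\setminus\{t\}$, which exists by hypothesis and satisfies $P(t)\mid s$. Any hypothetical failure of primitivity in $f(T)$ can then be promoted, by multiplying through by $s$, to a failure of strongly $k$-primitivity in $T$.

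First, to show $f$ is $1$ to $1$, I will suppose $f(t)=f(t')$ for distinct $t,t'\in T$, i.e.\ $t/P(t)=t'/P(t')$. Choosing $s\in T_{P(t)}\setminus\{t\}$ and substituting $P(t)\mid s$ into the rewrite $t=P(t)\cdot(t'/P(t'))$ yields $t\mid s\cdot t'$. Since $s,t'\in T\setminus\{t\}$ (possibly equal to each other, which is harmless in the strong setting), this contradicts strongly $2$-primitivity of $T$, which follows from strongly $k$-primitivity for $k\ge2$.

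Second, I will actually establish that $f(T)$ is strongly $(k-1)$-primitive, a fortiori the stated $(k-1)$-primitivity. Suppose for contradiction that $f(t)\mid f(t_1)\cdots f(t_j)$ for some $t_1,\ldots,t_j\in T\setminus\{t\}$ (not necessarily distinct) with $1\le j\le k-1$. Clearing this divisibility of integers and using both $t_i/P(t_i)\mid t_i$ and $P(t)\mid s$, I arrive at
\[
t\;\Big|\;P(t)\prod_{i=1}^{j}\frac{t_i}{P(t_i)}\;\Big|\;s\cdot t_1\cdots t_j,
\]
a product of $j+1\le k$ members of $T\setminus\{t\}$, contradicting strongly $k$-primitivity of $T$.

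The argument is essentially free of real obstacles; the only thing to verify is the formal manipulation that $f(t)\mid\prod f(t_i)$ entails $t\mid P(t)\prod(t_i/P(t_i))$, which is immediate from the definition of divisibility in $\Z$. The genuine content of the lemma lies in the role of the hypothesis $|T_p|\ge2$: without it there would be no $s$ available to absorb the stripped-off prime $P(t)$, and the whole strategy would collapse.
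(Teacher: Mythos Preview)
Your proof is correct and follows essentially the same approach as the paper: both parts use an auxiliary element $s\in T_{P(t)}\setminus\{t\}$ to lift a hypothetical divisibility in $f(T)$ back to a forbidden divisibility in $T$. Like you, the paper in fact establishes the stronger conclusion that $f(T)$ is strongly $(k-1)$-primitive, and your explicit remark that $s$ and $t'$ (respectively $s$ and the $t_i$) may coincide is exactly the point where the ``strong'' hypothesis is used.
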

\begin{proof}
Suppose $t,t'\in T$, $t\ne t'$, and $f(t)=f(t')$.  Since $|T_{P(t)}|\ge2$, there is some $s\in T\setminus\{t\}$ with $P(t)\mid s$.  Then
\[
t=P(t)\cdot\frac{t}{P(t)}=P(t)\cdot\frac{t'}{P(t')}\ {\Big|}\ st',
\]
contradicting $T$ as strongly 2-primitive.  Thus, $f$ is 1 to 1.  

Next, suppose that 
 $f(T)$ is not strongly $(k-1)$-primitive, so that there are $t,t_1,\dots t_{k-1}$ in $T$ with $t\notin\{t_1,\dots,t_k\}$ and
$$
f(t)\mid f(t_1)\cdots f(t_{k-1}).
$$
With $P(t)\mid s\ne t$ as above, we have
$t\mid s\cdot t_1\cdots t_{k-1}$, contradicting $T$ as strongly $k$-primitive.
Thus, $f(T)$ is strongly $(k-1)$-primitive, and the proof is complete.
\end{proof}

Let $T_\theta=T\setminus T_0=\{t\in T:P(t)\ge t^\theta\}$.  We apply Lemma \ref{derived1} to $T$, and so restricting the injection $f$ to $T_\theta$, we have $f(T_\theta)$ as a $(k-1)$-primitive set.
Further,
every $t\in T_\theta$ has $f(t)\le t^{1-\theta}$.  Thus,
 $t^{-\lambda} \le (t/P(t))^{- \lambda / (1 - \theta)}$ and by the induction hypothesis on the $(k-1)$-primitive set $f(T_\theta)$,
\begin{align*}
\sum_{t\in T_\theta}t^{-\lambda} \ \le \ \sum_{t\in T_\theta}\big(t/P(t)\big)^{-\lambda'} \ = \ \sum_{d\in f(T'_\theta)} d^{-\lambda'} \ \le \  \sum_{p \le Y} p^{-\lambda'}
\end{align*}
for $\lambda' :=\lambda_{k-1} = \frac{\lambda_k}{1-\theta}$. 

By way of \eqref{smallprime}, this allows us to replace \eqref{almostdone2} with
\[
I_\lambda=\sum_{t\in T}t^{-\lambda}-\sum_{p\le Y}p^{-\lambda}<
\sum_{p\le p_k}p^{-\nu\lambda}+\frac{\lambda}{\lambda-\frac1k-\theta}\,p_{k+1}^{-\nu(\lambda-\frac1k-\theta)}+\sum_{p\le Y}\big(p^{-\lambda'}-p^{-\lambda}\big),
\]
with the goal as before to show that $I_\lambda<0$.

By the mean value theorem, there is some $\xi\in(\lambda,\lambda')$ with 
\[
\sum_{p\le Y}\big(p^{-\lambda'}-p^{-\lambda}\big)= 
-(\lambda'-\lambda)\sum_{p\le Y}\frac{\log p}{p^\xi}<-\lambda'\theta\sum_{p\le Y}\frac{\log p}{p^{\lambda'}}.
\]
Since by Lemma \ref{Ysmall} we may assume that $Y\ge p_{k+1}$, it suffices, by Lemma \ref{primeineq2}, for us to show that
\begin{equation}
    \label{goal2}
\sum_{p\le p_k}p^{-\nu\lambda}+\frac{\lambda}{\lambda-\frac1k-\theta}\,p_{k+1}^{-\nu(\lambda-\frac1k-\theta)} 
<\lambda'\theta p_{k+1}^{1-\lambda'}\Big(1-\frac1{\log p_{k+1}}\Big).
\end{equation}
Now $\nu\lambda>3\log k$, so that
\[
\sum_{p\le p_k}p^{-\nu\lambda}<2^{-3\log k}+(k-1)3^{-3\log k}<k^{-2}+k\cdot k^{-3}=2k^{-2}.
\]
Using $k\ge39$ we see that $\nu(\lambda-\frac1k-\theta)>3\log k-2$
and $\lambda/(\lambda-\frac1k-\theta)<1.23$, so that
\[
\frac{\lambda}{\lambda-\frac1k-\theta}\,p_{k+1}^{-\nu(\lambda-\frac1k-\theta)} 
<1.23p_{k+1}^{-(3\log k-2)}<k^{-2}.
\]
So the left side of \eqref{goal2} is $<3k^{-2}$.  We now get a lower bound for the right side.
Using $k\ge39$, we have $\lambda'\theta>2(\log k)/k^2$ and
$p_{k+1}^{\lambda'}<4.4$.
Thus, 
\[
\lambda'\theta p_{k+1}^{1-\lambda}\Big(1-\frac1{\log p_{k+1}}\Big)
>0.79\frac{2\log k}{k^2}p_{k+1}/4.4
>\frac{0.36p_{k+1}\log k}{k^2}
>\frac{0.36\log^2k}{k},
\]
using that $p_{k+1}>p_k>k\log k$.  We do indeed have $3/k^2<0.36(\log^2k)/k$ when
$k\ge 39$, so we have \eqref{goal2}, and the theorem.

\section*{Acknowledgments}
The authors would like to acknowledge the University of Memphis for hosting the 2019 Erd\H{o}s Lecture Series, during which the initial ideas for this paper and its prequel were conceived. The second named author is supported by a Clarendon Scholarship at the University of Oxford.

\bibliographystyle{amsplain}

\end{document}